\newtheorem{lemma}{\textbf{Lemma}}
\newtheorem{corollary}{\textbf{Corollary}}
\newtheorem{proposition}{\textbf{Proposition}}
\newtheorem{theorem}{\textbf{Theorem}}
\newtheorem{conjecture}{\textbf{Conjecture}}
\theoremstyle{definition}
\newtheorem{definition}{\textbf{Definition}}
\providecommand{\abs}[1]{\left\lvert#1\right\rvert}
\providecommand{\norm}[1]{\left\rVert#1\right\rVert}
\providecommand{\set}[1]{ \left\{ #1  \right\}  }
\providecommand{\setb}[2]{ \left\{ #1 \ \middle| \  #2 \right\}  }
\providecommand{\innprod}[2]{\left\langle #1, #2 \right\rangle}
\providecommand{\parenth}[1]{\left( #1 \right) }
\providecommand{\expect}[1]{\mathbb{E}\left[ #1 \right] }
\def\CF{{\widehat{\mathscr{P}}}}
\def\D{{\mathcal{D}}}
\def\S{{\mathcal{S}}}
\def\R{{\mathcal{R}}}
\def\C{ \mathbb{C}}
\def\Z{ \mathbb{Z}}
\def\N{ \mathbb{N}}
\def\R{ \mathbb{R}}
\def\rbf{ \mathbf{r}}
\def\drm{\mathrm{d}}
\def\Rstar{ \mathbb{R} \backslash \{0\}}
\def\ui{\mathrm{i}}
\def\ue{\mathrm{e}}
\def\T{\mathbb{T}}
\date{}
\title{On the Besov Regularity of Periodic L\'evy Noises
  \thanks{The research leading to these results has received funding from the European Research Council under the European Union's Seventh Framework Programme (FP7/2007-2013) / ERC grant agreement $\text{n}^\circ$ 267439.}}
\author{
Julien Fageot  \thanks{Biomedical Imaging Group, \'Ecole polytechnique f\'ed\'erale de Lausanne (EPFL),
Station 17, CH-1015, Lausanne, Switzerland ({\tt julien.fageot@epfl.ch, michael.unser@epfl.ch, john.ward@epfl.ch}). }
\and
Michael Unser \footnotemark[2]
\and
John Paul Ward \footnotemark[2]
 }
\begin{document}

\maketitle

\begin{abstract}
In this paper, we study the Besov regularity of L\'evy white noises on the $d$-dimensional torus. Due to their rough sample paths, the white noises that we consider are defined as generalized stochastic fields. We, initially, obtain regularity results for general L\'evy white noises.  Then, we focus on two subclasses of noises: compound Poisson and symmetric-$\alpha$-stable (including Gaussian), for which we make more precise statements. Before measuring regularity, we show that the question is well-posed; we prove that Besov spaces are in the cylindrical $\sigma$-field of the space of generalized functions. These results pave the way to the characterization of the $n$-term wavelet approximation properties of stochastic processes.
\end{abstract}

\section{Introduction} \label{sec:intro}

The classical theory of stochastic processes deals with the definition and study of pointwise processes $(X_t)_{t\in T}$, where $T$ is a continuous-domain index set. 
For $T = \R$, L\'evy processes are important examples that are stochastically continuous with stationary and independent increments.
Conventionally, these processes are constructed by providing the probability law of their finite-dimensional marginals $(X_{t_1},\cdots,X_{t_N})$. 
Actually, they are fully characterized by the law of $X_1$, which is infinitely divisible \cite{Sato94}.
In this paper, we are using a more abstract approach. 
For us, a process is a probability measure on the space of generalized functions, and 
as such, it is called a generalized stochastic process. 
We are following the seminal work of Gelfand and Vilenkin (Chapter III of \cite{GelVil4}), and this approach
has at least two advantages:
1) it allows us to give a meaning to processes that have no classical definition, such as white noises or their derivatives 
and 2)  the solutions of stochastic partial differential equations are defined in a very general way.
Along with these advantages, we face the difficulty of not knowing \emph{a priori} anything about the regularity of generalized stochastic processes. The question of regularity is thus a crucial one.
For instance, the regularity of a process is directly related to its $n$-term wavelet approximation \cite{cohen2000,Devore1998Nterm,temlyakov1998} or tree approximation \cite{baraniuk2002,cohen2001}. 

In this paper, we propose to determine the regularity of white noises defined on the $d$-dimensional torus $\T^d$.
We obtain regularity results for general L\'evy noises, and we make more precise statements for the classes of compound Poisson and symmetric-$\alpha$-stable white noises.
Our reason for working with white noises and not, for instance, pointwise L\'evy processes, is that there is not a unique way to define L\'evy processes indexed by $\mathbf{r}\in \R^d$ with $d \geq 2$; in fact, there are several definitions of the so-called L\'evy fields  that generalize the one-dimensional setting.
However, for each such definition, a $d$-dimensional L\'evy field is obtained by integrating its corresponding L\'evy white noise.
Moreover, we prefer to study L\'evy white noises on $\T^d$, over those on $\R^d$, in order to focus solely on the question of regularity.
Indeed, the study of functions on $\R^d$ mixes the questions of regularity and decay at infinity.
For stochastic processes, understanding the decay properties is already challenging 
(consider the log iterate law for Brownian motion \cite{Karatzas2012brownian}).

We measure the regularity of periodic stochastic processes in terms of Besov spaces, which generalize certain families of Sobolev spaces. 
In general, they provide a finer measure of smoothness than Sobolev and H\"{o}lder spaces, and they often appear in Banach space interpolation, when interpolating between classical smoothness spaces. Our primary interest in Besov spaces is their connection to wavelet analysis.
Wavelet bases are composed of scaled versions of a mother wavelet, and for any given Besov space, we can define a wavelet basis. This  is very convenient for studying stable processes, as we shall see.

To the best of our knowledge, the specific question of the Besov regularity of general periodic L\'evy noises has not been addressed in the literature.  
However, there are numerous works dealing with the regularity of L\'evy-type processes, and these are closely related to ours.
On the regularity of Gaussian processes, we mention  \cite{bel1998linear,benyi2011modulation,ciesielski1993quelques,roynette1993mouvement}. 
The Sobolev regularity of non-periodic Gaussian white noises was first addressed by Kusuoka \cite{kusuoka1982support}.
More recently, Veraar obtained important results on the Besov regularity of periodic Gaussian white noises \cite{veraar2010regularity};
our work extends some of these results to the case of non-Gaussian noises. 
Stable classical processes were studied in \cite{ciesielski1993quelques,huang2007fractional}. 
More generally, Herren and Schilling examined the local Besov regularity of L\'evy processes in one dimension  \cite{herren1997levy,Jacob2001levy,schilling1997Feller,Schilling2000function}.
The works of Schilling include results on the global regularity of L\'evy processes in terms of weighted Besov spaces and extend to the case of Feller processes.
For a review on the sample path properties of one dimensional L\'evy processes, see \cite[Chapter 5]{Bottcher2013levy}; see also \cite{Barndorff2001levy}.
As we said, there are different definitions of classical L\'evy fields that generalize the one-dimensional L\'evy processes
(cf. \cite[Section 4]{fageot2014} for a white noise approach).
For discussions on this question, we refer the reader to \cite{dalang1992, durand2012multifractal}. 

Our work is motivated by the recent development of a probabilistic model for natural images, the \emph{innovation model} \cite{Unser2014sparse}. Images are described as solutions of stochastic partial differential equations driven by a L\'evy white noise. 
When the white noise is non-Gaussian, the processes of the innovation model are called \emph{sparse processes}, to which the results of this paper can be applied. Indeed, the regularity of L\'evy noises is the first question to address in order to understand the regularity of sparse processes. Note that the theory in \cite{Unser2014sparse} is not periodic; however, the present work is an important first step 
toward the understanding of sparse processes in general.

This paper is organized as follows. In Section \ref{sec:maths}, we introduce the main notions for our study: 
the definition of periodic generalized stochastic processes and Besov spaces.
Then, we show that the question of the Besov regularity of a generalized process is well-posed. 
Technically, we show that Besov spaces are part of the cylindrical $\sigma$-field of $\S'(\T^d)$ (Section \ref{measurable}). 
Taking advantage of these new results, we determine the Besov regularity of periodic white noises, 
with a special emphasis on stable noises  (Section \ref{noise_regularity}).
The last section contains a discussion of our results and some possible extensions.

\section{Mathematical Foundations} \label{sec:maths}

\subsection{Periodic L\'evy White Noises}
	 
In this paper, we study the regularity of periodic L\'evy white noises. 
It is well-known that white noises cannot be defined as pointwise stochastic processes. 
Nevertheless, they can be defined as random generalized functions \cite{GelVil4}.
Note that Gelfand and Vilenkin only define non-periodic white noises, as generalized processes, \emph{i.e.} random elements of $\D'(\R^d)$, the space of generalized functions.
Thus, we extend \cite{GelVil4} to periodic generalized processes and periodic white noises so that 
 a periodic white noise is defined as a random element of $\S'(\T^d)$, the space of generalized functions on the $d$-dimensional torus $\T^d= \R^d / \Z^d$.

For a proper definition of periodic generalized processes, we introduce a measurable structure on the space of periodic generalized functions. The space of smooth periodic functions is denoted by $\S(\T^d)$, with topological dual $\S'(\T^d)$. For $u\in \S'(\T^d)$ and $\varphi \in \S(\T^d)$, we denote their duality products as $\innprod{u}{\varphi}$, assumed to be linear in both components. We use the same notation, in general, for pairs of elements from topological dual spaces. For  $(u,\varphi) \in E'\times E$ with $\S(\T^d) \subset E$ and $E'\subset \S'(\T^d)$, we write $\innprod{u}{\varphi}$, with the test function $\varphi$ in the second position. 

We denote probability spaces as $(\Omega, \mathcal{F}, \mathscr{P})$. 
A cylindrical set of $\S'(\T^d)$ is a subset of the form
\begin{equation}
	\setb{ u \in \S'(\T^d) }
	{ \parenth{ \innprod{u}{\varphi_1} , \cdots, \innprod{u}{\varphi_N} } \in B },
\end{equation}
with $N\in \N$, $(\varphi_1,\cdots,\varphi_N) \in (\S(\T^d))^N$ and $B$ a Borel subset of $\R^N$. The cylindrical $\sigma$-field of $\S'(\T^d)$ is the $\sigma$-field $\mathcal{B}_c(\S'(\T^d))$ of $\S'(\T^d)$ generated by the cylindrical sets. 
\begin{definition}
A \emph{generalized periodic process on $\Omega$} is a measurable function 
\begin{equation}
	s  :  \parenth{\Omega, \mathcal{F}} 
	\rightarrow 
	\parenth{\S'(\T^d), \mathcal{B}_c \parenth{\S'(\T^d)}}.
\end{equation}
The \emph{probability law of $s$} is the probability measure $\mathscr{P}_s$ on the measurable space $ \parenth{\S'(\T^d), \mathcal{B}_c \parenth{\S'(\T^d)}}$:
\begin{equation}
	\forall B 
	\in  
	\mathcal{B}_c(\S'(\T^d)),
	\quad
	\mathscr{P}_s(B) 
	:= 
	\mathscr{P} \parenth{ \setb{ \omega \in \Omega}{  s(\omega) \in B } }.
\end{equation}
The \emph{characteristic functional} of $s$ is the Fourier transform of its probability law, given for $\varphi \in \S(\T^d)$ by
\begin{equation}
	\CF_s(\varphi)
	:= 
	\int_{\S'(\T^d)} 
	\ue^{\ui \langle u,\varphi\rangle} 
	\drm \mathscr{P}_s(u).
\end{equation}
\end{definition}

The characteristic functional characterizes the law of $s$. Because the space $\S(\R^d)$ is nuclear, we have the following result, known as the Minlos-Bochner theorem. 

\begin{theorem} [Theorem 3, Section III-2.6, \cite{GelVil4}] 
A functional $\CF : \S(\T^d)\rightarrow \C$ is the characteristic functional of some generalized process $s$ if and only if $\CF$ is continuous, positive-definite, and $\CF(0)=1$. 
\end{theorem}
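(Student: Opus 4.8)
The plan is to prove the two implications separately: necessity by a direct computation, and sufficiency by invoking Minlos' theorem for nuclear spaces after recording that $\S(\T^d)$ has the required nuclear structure.

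For necessity, suppose $\CF = \CF_s$ for a generalized periodic process $s$ with law $\mathscr{P}_s$. Choosing $\varphi = 0$ gives $\CF(0) = \int_{\S'(\T^d)} 1 \, \drm \mathscr{P}_s(u) = 1$. Positive-definiteness comes from the identity
\[
  \sum_{j,k=1}^N c_j \overline{c_k}\, \CF(\varphi_j - \varphi_k)
  = \int_{\S'(\T^d)} \Bigl| \sum_{j=1}^N c_j \, \ue^{\ui \innprod{u}{\varphi_j}} \Bigr|^2 \drm \mathscr{P}_s(u) \geq 0,
\]
valid for all $N \in \N$, all $(\varphi_1,\dots,\varphi_N) \in (\S(\T^d))^N$, and all $(c_1,\dots,c_N) \in \C^N$. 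Continuity follows from dominated convergence: if $\varphi_n \to \varphi$ in $\S(\T^d)$, then $\innprod{u}{\varphi_n} \to \innprod{u}{\varphi}$ for every $u \in \S'(\T^d)$ by continuity of $u$, so $\ue^{\ui \innprod{u}{\varphi_n}} \to \ue^{\ui \innprod{u}{\varphi}}$ pointwise, with integrand dominated by $1 \in L^1(\mathscr{P}_s)$.

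For sufficiency, the structural fact I would use is that $\S(\T^d) = C^\infty(\T^d)$ is a nuclear Fr\'echet space; concretely, the Fourier-coefficient map identifies it topologically with the space $s(\Z^d)$ of rapidly decreasing sequences, whose topological dual is $s'(\Z^d) \cong \S'(\T^d)$. I would then argue in three steps. (i) For each finite tuple $(\varphi_1,\dots,\varphi_N) \in (\S(\T^d))^N$, the map $(\xi_1,\dots,\xi_N) \mapsto \CF(\xi_1 \varphi_1 + \cdots + \xi_N \varphi_N)$ is continuous, positive-definite, and equal to $1$ at the origin, so Bochner's theorem yields a probability measure $\mu_{\varphi_1,\dots,\varphi_N}$ on $\R^N$ having it as Fourier transform. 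Since all these measures are marginals of the single positive-definite functional $\CF$ on the vector space $\S(\T^d)$, they form a consistent projective family, and the Kolmogorov extension theorem produces a finitely additive cylinder measure on $\S'(\T^d)$ with characteristic functional $\CF$. (ii) The crucial step is to upgrade this cylinder measure to a countably additive probability measure on $\parenth{\S'(\T^d), \mathcal{B}_c(\S'(\T^d))}$. Continuity of $\CF$ at the origin furnishes a continuous Hilbertian seminorm $p$ on $\S(\T^d)$ with $\abs{1 - \CF(\varphi)} < \epsilon$ whenever $p(\varphi) \leq 1$, and nuclearity supplies a larger continuous Hilbertian seminorm $q \geq p$ for which the canonical map $H_q \to H_p$ between the associated local Hilbert spaces is Hilbert-Schmidt; the Minlos--Sazonov argument then shows the cylinder measure, realized on $H_q'$, is concentrated on a $\sigma$-compact subset, hence extends to a Radon probability measure there, which one transports along the continuous inclusion $H_q' \hookrightarrow \S'(\T^d)$. (iii) Uniqueness follows since any two probability measures on $\S'(\T^d)$ with characteristic functional $\CF$ have identical image measures under every map $u \mapsto (\innprod{u}{\varphi_1},\dots,\innprod{u}{\varphi_N})$ by Fourier uniqueness, hence agree on all cylindrical sets and therefore on $\mathcal{B}_c(\S'(\T^d))$.

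The main obstacle is step (ii): the passage from a finitely additive cylinder measure to a genuine $\sigma$-additive measure concentrated on the topological dual is precisely where continuity of $\CF$ must be paired with nuclearity of $\S(\T^d)$, and it fails in the absence of nuclearity (the standard Gaussian cylinder measure on an infinite-dimensional Hilbert space is not countably additive on that space). As the statement is quoted from Gelfand--Vilenkin, in practice I would cite their proof of Minlos' theorem for nuclear spaces and only verify the two periodic-specific facts, namely the nuclear Fr\'echet structure of $\S(\T^d)$ and the identification of its dual with $\S'(\T^d)$.
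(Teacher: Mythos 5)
The paper does not actually prove this statement: it is imported verbatim from Gelfand--Vilenkin after the remark that $\S(\T^d)$ is nuclear, so your closing plan (cite their proof of the Minlos theorem and verify only the periodic-specific facts, namely that $\S(\T^d)$ is a nuclear Fr\'echet space with topological dual $\S'(\T^d)$) is exactly what the paper does. Your necessity computation is correct, and your three-step outline of sufficiency (finite-dimensional Bochner plus consistency, the Minlos--Sazonov upgrade from a cylinder measure to a countably additive Radon measure via a Hilbert--Schmidt embedding of local Hilbert spaces, and uniqueness from the finite-dimensional marginals on the $\pi$-system of cylinders) is a faithful sketch of the cited argument, with the genuinely hard step correctly identified.
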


A function $f:\R \rightarrow \C$ is said to be a L\'evy exponent if it is the log-characteristic function of an infinitely divisible law. To be precise, L\'evy exponents $f$ are continuous functions that are conditionally positive definite of order one with $f(0)=0$ \cite[Section 4.2]{Unser2014sparse}. 
The L\'evy-Khintchine theorem (cf. Theorem 8.1 of \cite{Sato94}) 
ensures that there exists $\mu \in \R$, $\sigma^2 \geq 0$ and a L\'evy
measure $V$ ---\emph{i.e.}, a Radon measure on $\Rstar$ with $\int_{\Rstar} \min(1,t^2) V(\drm t) < \infty$--- such
that
\begin{equation} \label{eq:LK}
	f(\xi) 
	= 
	\ui \mu \xi 
	- \frac{\sigma^2 \xi^2}{2} 
	+ \int_{\Rstar} 
	\parenth{\ue^{\ui \xi t} - 1 - \ui \xi t 1_{|t|\leq1} } 
	V(\drm t).
\end{equation}

A L\'evy white noise $w$ on $\D'(\R^d)$ is defined from its characteristic functional by the relation,  
for all $\varphi \in \D(\R^d)$, 
\begin{equation}
	\CF_w(\varphi) 
	= 
	\exp \parenth{ 
	\int_{\R^d} 
	f(\varphi(\rbf)) 
	\drm\rbf }, 
\end{equation}
with $f$ a L\'evy exponent (see \cite[Theorem 6, Section III-4.4]{GelVil4}).
L\'evy white noises are stationary and independent at every point (which means that $\innprod{w}{\varphi_1}$ and $\innprod{w}{\varphi_2}$ are independent if the supports of $\varphi_1,\varphi_2\in \D(\R^d)$ are disjoint).
Following the same idea, we now define periodic L\'evy white noises.

\begin{definition}
A generalized periodic process $w$ is a \emph{periodic L\'evy white noise} if its characteristic functional has the form 
\begin{equation} \label{eq:CF}
	\CF_w(\varphi) 
	= 
	\exp 
	\parenth{ 
	\int_{\T^d} 
	f(\varphi(\rbf)) 
	\drm\rbf }, \quad 
	\forall \varphi \in \S(\T^d),
\end{equation}
with $f$ a L\'evy exponent. 
\end{definition}
The proof that the previous functional $\CF_w$ is a well-defined characteristic functional on $\S(\T^d)$ is deduced from the case of white noises over $\D'(\R^d)$ (see \cite[Theorem 5, Section III-4.3]{GelVil4}). Essentially, this reduces to the fact that the periodization of test functions from $\D(\R^d)$ are test functions in $\S(\T^d)$.

\subsection{Smoothness Spaces}

While our goal is to characterize the smoothness of white noise processes in terms of Besov regularity, we make use of Sobolev spaces as an intermediate step.  The Sobolev space $H_2^{\tau}(\T^d)$ of order $\tau \in \R$ is defined as follows.

\begin{definition}
A periodic generalized function $f$ is in $H_2^{\tau}(\mathbb{T}^d)$ if its norm, defined by 
\begin{equation}
	\norm{f}_{H_2^{\tau}}
	:=
	\sqrt{
	\sum_{\bm{n}\in \Z^d} 
	(1 + \abs{\bm{n}})^{2\tau} 
	\abs{ 
	\innprod{f}{ 
	\ue^{- 2\pi \ui \langle \bm{n},  \cdot \rangle} 
	} }^2},  
\end{equation}
is finite.
\end{definition}

Besov spaces can be used to accurately characterize the smoothness of periodic generalized functions.  One of the advantages of using these spaces is that they are well suited to wavelet analysis.  We can define wavelet bases for Besov spaces, and the wavelet analysis provides a means to define an isomorphism between Besov spaces on the domains $\mathbb{T}^d$ and $\mathbb{N}$.  

For clarity in the definition, we index the Besov sequences in a manner analogous to the wavelet index. The index $j$ denotes the scaling index of the wavelet, and it ranges from $0$ to $\infty$.  We adjust the scaling so that the scale $0$ wavelets are supported within the torus, and $L\in\mathbb{N}$ is used for this purpose.  The index $G$ is used to refer to the gender of the wavelet.  Since the coarsest scale includes the scaling function, the set of indices $G^0$ at scale $0$ has $2^d$ elements. For $j>0$, $G^j$ has $2^d-1$ elements. The translation sets are denoted as  
\begin{equation}
	\mathbb{P}_j^d 
	=
	\setb{ 
	m\in\mathbb{Z}^d}{
	0\leq m_k<2^{j+L},
	1 \leq k  \leq d 
	}, 
	\quad 
	j\in\mathbb{N}_0.
\end{equation}
These are the $2^{(j+L)d}$ lattice points in $2^{(j+L)}\mathbb{T}^d$. 
Finally, we denote the smoothness of a wavelet by the index $u\in\mathbb{N}$; \emph{i.e.}, $u$ denotes the number of continuous derivatives of a given wavelet basis.
For a given smoothness $u$, we can construct compactly supported wavelets that are orthonormal bases of $L_2(\mathbb{T}^d)$, cf. \cite[Section 1.3.2]{triebel08}. We consider only wavelets that are real valued. 

\begin{proposition}[Proposition 1.34, \cite{triebel08}]
If $u\in\mathbb{N}$, then there is a wavelet system 
\begin{equation}
	\setb{\Psi_{G,m}^{j,\mathrm{per}}}{ j\in\mathbb{N}_0,\ G\in G^j,\ m\in\mathbb{P}_j^d }
\end{equation}
with regularity $u$ that is an orthonormal basis of $L_2(\mathbb{T}^d)$. \end{proposition}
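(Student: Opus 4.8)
The plan is to realize the periodic system as the periodization of a compactly supported, $C^u$-smooth Daubechies-type wavelet basis of $L_2(\R^d)$, and then to transfer orthonormality and completeness from $\R^d$ to $\T^d$ via the standard periodization identity. The scale offset $L$ is what makes the reduction of the translation index set to $\mathbb{P}_j^d$ work, and it is chosen large enough that even the coarsest-scale wavelets already ``fit'' inside the fundamental domain.

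First I would invoke the one-dimensional building blocks: for every $u\in\N$ there exist real-valued, compactly supported $\psi_F,\psi_M\in C^u(\R)$ generating an orthonormal multiresolution analysis of $L_2(\R)$, the filter length (and hence the diameter $2A$ of the common support of $\psi_F,\psi_M$) growing with $u$ but staying finite. Taking tensor products $\Psi_G=\prod_{k=1}^d\psi_{G_k}$ for $G\in\{F,M\}^d$ and dilating by $2^{j+L}$, i.e. setting $\Psi_{G,m}^{j}(\rbf):=2^{(j+L)d/2}\Psi_G(2^{j+L}\rbf-m)$ for $j\in\N_0$, $m\in\Z^d$, one obtains the usual orthonormal wavelet basis $\{\Psi_{G,m}^{j}\}$ of $L_2(\R^d)$, with $G$ ranging over $G^0=\{F,M\}^d$ (the $2^d$ genders) at the coarsest scale and over $G^j=\{F,M\}^d\setminus\{(F,\dots,F)\}$ (the $2^d-1$ genders) for $j>0$. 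Here $L\in\N$ is fixed with $2^L>2A$, so each $\Psi_{G,m}^{0}$ has support of diameter $<1$.

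Next I would periodize: $\Psi_{G,m}^{j,\mathrm{per}}(\rbf):=\sum_{\ell\in\Z^d}\Psi_{G,m}^{j}(\rbf+\ell)$. The sum is locally finite by compact support, and a locally finite sum of $C^u$ functions is $C^u$, so the periodic system has regularity $u$. Since $\Psi_{G,m}^{j}$ and $\Psi_{G,m'}^{j}$ are $\Z^d$-translates of one another precisely when $m\equiv m'\pmod{2^{j+L}}$, the periodized function depends on $m$ only through its residue class, so the distinct periodizations are indexed exactly by $m\in\mathbb{P}_j^d$, a set of $2^{(j+L)d}$ elements — matching the index set in the statement. Orthonormality in $L_2(\T^d)$ then follows from the identity $\innprod{f^{\mathrm{per}}}{g^{\mathrm{per}}}_{L_2(\T^d)}=\sum_{\ell\in\Z^d}\innprod{f}{g(\cdot+\ell)}_{L_2(\R^d)}$, valid for compactly supported $f,g\in L_2(\R^d)$: it reduces the claim to the orthonormality of the $\R^d$ system together with the fact — guaranteed by $2^L>2A$ — that at scale $0$ a wavelet and any nontrivial $\Z^d$-translate of it have disjoint supports.

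Finally, to prove completeness, I would introduce the periodic scaling spaces $V_j^{\mathrm{per}}:=\mathrm{span}\{\Psi_{(F,\dots,F),m}^{j,\mathrm{per}}:m\in\mathbb{P}_j^d\}$ and show, by periodizing the refinement relations of the $\R^d$ MRA, that $V_j^{\mathrm{per}}\subset V_{j+1}^{\mathrm{per}}$ and that the periodized scale-$j$ wavelets with $G\neq(F,\dots,F)$ form an orthonormal basis of $W_j^{\mathrm{per}}$, the orthogonal complement of $V_j^{\mathrm{per}}$ in $V_{j+1}^{\mathrm{per}}$; moreover $\bigcup_{j\geq 0}V_j^{\mathrm{per}}$ is dense in $L_2(\T^d)$, which follows from the partition-of-unity property $\sum_{m\in\Z^d}\Psi_{(F,\dots,F)}(\rbf-m)\equiv 1$ (so the orthogonal projections onto $V_j^{\mathrm{per}}$ reproduce constants and converge to the identity on $C(\T^d)$, hence on $L_2(\T^d)$). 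Telescoping then gives $V_0^{\mathrm{per}}\oplus\bigoplus_{j\geq 0}W_j^{\mathrm{per}}=L_2(\T^d)$, which is exactly the asserted orthonormal wavelet basis. The main obstacle I anticipate is the completeness step: one must track carefully that the nesting, orthogonality, and dimension count of the real-line MRA all survive periodization — in particular that no spurious linear dependence appears among the $2^{(j+L)d}$ periodized functions at a given scale — and establish the density of $\bigcup_j V_j^{\mathrm{per}}$; by contrast, the existence of the high-smoothness one-dimensional generators is classical and would simply be cited, and the periodization identity is routine once compact support is available.
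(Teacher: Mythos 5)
This proposition is quoted directly from Triebel (Proposition 1.34 of \cite{triebel08}); the paper supplies no proof of its own, and your construction --- periodizing a compactly supported $C^u$ Daubechies tensor-product basis of $L_2(\R^d)$, with the scale offset $L$ absorbing the support diameter, and recovering completeness from the periodized multiresolution ladder plus the partition-of-unity density argument --- is exactly the standard route taken in that reference. The argument is correct as outlined (the only cosmetic remark is that orthonormality does not actually need the disjoint-support condition $2^L>2A$, since a $\Z^d$-translate of a scale-$j$ basis element is again a basis element with translation index shifted by $2^{j+L}\ell$, so all cross terms in the periodization identity vanish by orthonormality on $\R^d$ alone).
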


We exclusively use the wavelet characteriztion of Besov spaces. For further details on alternative definitions, see \cite[Section 3.5]{schmeisser87}. We begin by specifying the Besov sequence spaces.

\begin{definition}[Definition 1.32, \cite{triebel08}]
Let $\tau \in \mathbb{R}$, $0<p,q\leq \infty$. Then the Besov sequence space $b_{p,q}^{\tau,\mathrm{per}}$ is the collection of all sequences 
\begin{equation}
	\lambda 
	=
	\setb{ \lambda_{m}^{j,G} \in\mathbb{C}}{ 
	j\in\mathbb{N}_0, G\in G^j,m\in\mathbb{P}_j^d }
\end{equation}
such that 
\begin{equation}
	\norm{\lambda }_{b_{p,q}^{\tau,\mathrm{per}}} 
	:=
	\parenth{ 
	\sum_{j=0}^{\infty} 2^{j(\tau-d/p)q} 
	\sum_{G\in G^j} 
	\parenth{ 
	\sum_{m\in \mathbb{P}_j^d }\abs{\lambda_m^{j,G}}^p 
	}^{q/p}  
	}^{1/q}
	<
	\infty.
\end{equation}
When $p,q=\infty$, suitable modifications must be made to the norm.
\end{definition}

The following is a characterization of periodic Besov spaces. 

\begin{theorem}[Theorem 1.37, {\cite{triebel08}}] \label{theo_triebel}
Let $\{\Psi_{G,m}^{j,\mathrm{per}}\}$ be an orthonormal basis of $L_2(\mathbb{T}^d)$  with regularity $u\in\mathbb{N}$. Let $0<p,q\leq \infty$ and $\tau_0\in \mathbb{R}$ such that  $u>\max \left(\tau_0,d\left(1/p-1\right)_+-\tau_0\right)$. Let $f\in \S^\prime (\mathbb{T}^d)$. Then $f\in B_{pq}^{\tau_0}(\mathbb{T}^d)$ if, and only if, it can be represented as 
\begin{equation}
	f 
	= 
	\sum_{j,G,m} 
	\lambda_m^{j,G}2^{-(j+L)d/2}
	\Psi_{G,m}^{j,\mathrm{per}}, \quad 
	\lambda
	\in b_{p,q}^{\tau_0,\mathrm{per}},
\end{equation}
which converges unconditionally in $\S^\prime (\mathbb{T}^d)$ and in any space $B_{p,q}^{\tau}(\mathbb{T}^d)$ with $\tau<\tau_0$. Furthermore, this representation is unique,
\begin{equation}
	\lambda_m^{j,G}
	=
	\innprod{f}{
	2^{(j+L)d/2}\Psi_{G,m}^{j,\mathrm{per}} },
\end{equation}
and 
\begin{equation*}
	I: 
	f
	\mapsto 
	\set{   \innprod{f}{ 2^{(j+L)d/2}\Psi_{G,m}^{j,\mathrm{per}} }  }
\end{equation*}
is an isomorphic map of $B_{p,q}^{\tau_0}(\mathbb{T}^d)$ onto $b_{p,q}^{\tau_0,\mathrm{per}}$.
\end{theorem}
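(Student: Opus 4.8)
The final statement is Triebel's wavelet characterization of periodic Besov spaces, quoted here as a known fact; a proof proposal therefore amounts to sketching how one establishes the wavelet characterization of $B_{pq}^{\tau_0}(\T^d)$ from first principles. The plan is to fix as reference definition the Fourier-analytic (periodic Littlewood--Paley) one for $B_{pq}^{\tau_0}(\T^d)$, and then to show that the analysis map $I: f\mapsto \set{\innprod{f}{2^{(j+L)d/2}\Psi_{G,m}^{j,\mathrm{per}}}}$ and the synthesis map $S:\lambda\mapsto \sum_{j,G,m}\lambda_m^{j,G}2^{-(j+L)d/2}\Psi_{G,m}^{j,\mathrm{per}}$ are bounded between $B_{pq}^{\tau_0}(\T^d)$ and $b_{p,q}^{\tau_0,\mathrm{per}}$ and are mutually inverse. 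In practice the cleanest route is to transfer the already established non-periodic wavelet characterization of $B_{pq}^{\tau_0}(\R^d)$ to the torus by periodizing both the Daubechies-type wavelets and the test functions; this reduces the periodic statement to the Euclidean one together with routine bookkeeping about which dyadic scales ``fit'' inside $\T^d$, which is precisely the role of the shift $L$ and of replacing the coarsest-scale wavelets by scaling functions (so that $G^0$ has $2^d$ elements).

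For the direct approach, the two central estimates are as follows. First, boundedness of $I$: write $f=\sum_{k\geq 0}f_k$ with $f_k$ the $k$-th periodic Littlewood--Paley block, so that $\lambda_m^{j,G}=\sum_k\innprod{f_k}{2^{(j+L)d/2}\Psi_{G,m}^{j,\mathrm{per}}}$. Since $\Psi_{G,m}^{j,\mathrm{per}}$ has $u$ continuous derivatives and, being orthogonal to the scaling space, annihilates polynomials of degree $<u$, integration by parts against $f_k$ (which is band-limited) yields almost-diagonal decay of the form $2^{-\abs{j-k}\kappa}$ for a suitable $\kappa$, times a spatial localization factor concentrated near $2^{-j}m$. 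Summing these bounds by a discrete Hardy-type inequality gives $\norm{\lambda}_{b_{p,q}^{\tau_0,\mathrm{per}}}\lesssim\norm{f}_{B_{pq}^{\tau_0}}$. The hypothesis $u>\tau_0$ makes the ``smoothness side'' of the geometric series converge, while $u>d(1/p-1)_+-\tau_0$ supplies the additional vanishing moments needed to control the sum over $k>j$ in the regime $p<1$. Second, boundedness of $S$: the mirror-image molecular estimate, now expanding each $2^{-(j+L)d/2}\Psi_{G,m}^{j,\mathrm{per}}$ against the Littlewood--Paley blocks and summing in the same way.

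The remaining points are comparatively soft. Because $\set{\Psi_{G,m}^{j,\mathrm{per}}}$ is an orthonormal basis of $L_2(\T^d)$, the reconstruction identity $S\circ I=\mathrm{id}$ holds on $L_2(\T^d)\cap B_{pq}^{\tau_0}(\T^d)$, which is dense, and hence on all of $B_{pq}^{\tau_0}(\T^d)$ by continuity of $I$ and $S$; this simultaneously gives surjectivity of $I$ onto $b_{p,q}^{\tau_0,\mathrm{per}}$ and uniqueness of the coefficient sequence, so $I$ is the claimed isomorphism. Unconditional convergence of the wavelet series in $\S'(\T^d)$ follows from summing $\abs{\lambda_m^{j,G}}$ against the rapidly decreasing pairings $\abs{\innprod{\Psi_{G,m}^{j,\mathrm{per}}}{\varphi}}$ for $\varphi\in\S(\T^d)$, and convergence in $B_{p,q}^{\tau}(\T^d)$ for $\tau<\tau_0$ follows because the tail of the series beyond scale $J$ has $b_{p,q}^{\tau,\mathrm{per}}$-norm bounded by $2^{-(\tau_0-\tau)J}\norm{\lambda}_{b_{p,q}^{\tau_0,\mathrm{per}}}$, which tends to $0$.

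The main obstacle is the pair of molecular/almost-orthogonality estimates together with the careful summation in the quasi-Banach range $0<p,q<1$, where the triangle inequality must be replaced by $\ell^p$- and $\ell^q$-subadditivity and the exact exponents must be tracked; this is exactly where the sharp lower bound on $u$ is consumed. A secondary nuisance is the periodic geometry at coarse scales, where wavelet supports wrap around $\T^d$; the periodization argument disposes of this at the modest cost of carrying the parameter $L$ throughout.
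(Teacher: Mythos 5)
This statement is imported verbatim from Triebel (Theorem 1.37 of the cited monograph); the paper offers no proof of it, so there is nothing internal to compare against. Your sketch follows the standard route by which this result is actually established in the literature: reduce to the Euclidean wavelet characterization by periodization (with the parameter $L$ absorbing the coarse-scale bookkeeping and the enlarged gender set $G^0$ accounting for the scaling functions), or equivalently run the Littlewood--Paley/almost-orthogonality argument directly, with $u>\tau_0$ paying for the smoothness of the building blocks on the synthesis side and $u>d(1/p-1)_+-\tau_0$ paying for the cancellation needed on the analysis side in the quasi-Banach range. As an outline this is sound and correctly identifies where the hypothesis on $u$ is consumed.

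Two soft spots in your "comparatively soft" paragraph deserve attention. First, $S\circ I=\mathrm{id}$ on $B_{pq}^{\tau_0}(\T^d)$ gives injectivity of $I$ and surjectivity of $S$, but not surjectivity of $I$: it only exhibits $IS$ as a bounded projection onto $\mathrm{ran}(I)$. To conclude that $I$ maps \emph{onto} $b_{p,q}^{\tau_0,\mathrm{per}}$ you also need $I\circ S=\mathrm{id}$ on the sequence space, i.e.\ that for $\lambda\in b_{p,q}^{\tau_0,\mathrm{per}}$ the pairing $\innprod{S\lambda}{2^{(j+L)d/2}\Psi_{G,m}^{j,\mathrm{per}}}$ may be computed term by term and returns $\lambda_m^{j,G}$ by orthonormality; this requires justifying the interchange of the (only distributionally convergent) sum with the duality pairing, which is where the condition that the wavelets lie in the dual space is used. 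Second, the extension of $S\circ I=\mathrm{id}$ from $L_2\cap B_{pq}^{\tau_0}$ "by density" fails when $p=\infty$ or $q=\infty$, since $L_2(\T^d)$ is not dense there; the standard fix is the Fatou property of the Besov quasi-norms (approximate $f$ by its partial Littlewood--Paley sums, which converge only in a weaker topology, and pass to the limit in the quasi-norm estimate). Neither point is fatal, but both must be addressed for the argument to cover the full stated range $0<p,q\leq\infty$.
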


Besov spaces are Banach spaces when $p,q \geq 1$ and quasi-Banach spaces otherwise.
Connections between Besov spaces with different parameters are provided below. For the benefit of the reader, we recall the necessary results from \cite{schmeisser87}.
We follow the notation of this reference for (quasi-)Banach space embeddings.

\begin{definition}
A (quasi-)Banach space $A_1$ is continuously embedded in another (quasi-)Banach space $A_2$, denoted as $A_1\subset A_2$, if there exists a constant $C>0$ such that 
\begin{equation}
	\norm{f}_{A_2}
	\subset 
	C\norm{f}_{A_1}
\end{equation}
for every $f\in A_1$.
\end{definition}

\begin{proposition}[p.164-170, \cite{schmeisser87}]\label{prop:embed}
If $\tau \in \R$ and $\epsilon>0$, then 
\begin{alignat}{3} \label{eq:embeddings}
	B_{2,2}^{\tau}(\T^d) 
	&= 
	H_{2}^{\tau}(\T^d),  \nonumber  \\
	B_{p_0,q_0}^{\tau+d(1/{p_0}-1/{p_1})_{+}}(\T^d) 
	&\subset 
	B_{p_1,q_1}^{\tau-\epsilon}(\T^d), 
	&
	\quad
	0 
	&<
	p_0 , p_1 \leq \infty,
	&
	\quad
	0
	&< q_0,q_1\leq \infty, \nonumber \\
	B_{p,q}^{\tau+d(1/p-1/2)_{+}}(\T^d)
	&\subset
	H_{2}^{\tau-\epsilon}(\T^d),
	&
	0
	&<
	p \leq \infty,
	&
	0
	&< q \leq \infty, \nonumber \\
	H_{2}^{\tau+d(1/2-1/p)_{+}}(\T^d)
	&\subset
	B_{p,q}^{\tau-\epsilon}(\T^d),
	&
	0
	&<
	p \leq \infty,
	&
	0
	&< q \leq \infty.
\end{alignat} 
\end{proposition}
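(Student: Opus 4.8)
The plan is to push everything through the wavelet isomorphism $I$ of Theorem~\ref{theo_triebel}, so that each of the four assertions becomes an elementary inequality for complex sequences indexed by $(j,G,m)$. First I would fix one wavelet system whose regularity $u$ exceeds the threshold $\max(\tau_0,d(1/p-1)_+-\tau_0)$ for \emph{every} pair of indices occurring in the statement; since only finitely many smoothness levels are involved ($\tau$, $\tau\pm\epsilon$, $\tau+d(1/p_0-1/p_1)_+$ and their shifts), such a $u$ exists. By Theorem~\ref{theo_triebel}, for all relevant $(p,q,\tau)$ the map $I$ is an isomorphism of $B_{p,q}^{\tau}(\T^d)$ onto $b_{p,q}^{\tau,\mathrm{per}}$, so $\|f\|_{B_{p,q}^\tau}\asymp\|If\|_{b_{p,q}^{\tau,\mathrm{per}}}$, and it suffices to establish the corresponding continuous inclusions between the sequence spaces $b_{p,q}^{\cdot,\mathrm{per}}$.

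For the identity $B_{2,2}^{\tau}(\T^d)=H_2^{\tau}(\T^d)$ I would spell out that, with the normalisation of Theorem~\ref{theo_triebel} and orthonormality of the $\Psi_{G,m}^{j,\mathrm{per}}$, the scale-$j$ component $P_jf:=\sum_{G\in G^j}\sum_{m\in\mathbb{P}_j^d}\lambda_m^{j,G}2^{-(j+L)d/2}\Psi_{G,m}^{j,\mathrm{per}}$ satisfies $\|P_jf\|_{L_2}^2=2^{-(j+L)d}\sum_{G,m}|\lambda_m^{j,G}|^2$, so that $\|If\|_{b_{2,2}^{\tau,\mathrm{per}}}^2\asymp\sum_{j\ge0}2^{2j\tau}\|P_jf\|_{L_2(\T^d)}^2$. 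Since the wavelets have regularity $u\ge1$ and compact support, the $\{P_j\}$ form a Littlewood--Paley-type decomposition whose summands are uniformly frequency-localised near $|\bm{n}|\asymp2^j$; combining this with Plancherel's theorem on $\T^d$ gives $\sum_{j\ge0}2^{2j\tau}\|P_jf\|_{L_2}^2\asymp\sum_{\bm{n}\in\Z^d}(1+|\bm{n}|)^{2\tau}\,|\innprod{f}{\ue^{-2\pi\ui\innprod{\bm{n}}{\cdot}}}|^2=\|f\|_{H_2^\tau}^2$. (This is the classical coincidence $B_{2,2}^\tau=H_2^\tau$ recorded in \cite{schmeisser87,triebel08}, and one may simply cite it.)

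For the main embedding, with $\sigma:=\tau+d(1/p_0-1/p_1)_+$, I would prove $b_{p_0,q_0}^{\sigma,\mathrm{per}}\subset b_{p_1,q_1}^{\tau-\epsilon,\mathrm{per}}$ scale by scale. For fixed $j$ and $G$, I compare the inner $\ell^{p_1}(\mathbb{P}_j^d)$-quasinorm of $(\lambda_m^{j,G})_m$ with its $\ell^{p_0}(\mathbb{P}_j^d)$-quasinorm: if $p_0\le p_1$, monotonicity of $\ell^p$-quasinorms gives this with constant $1$ while $d(1/p_0-1/p_1)_+=d(1/p_0-1/p_1)$; if $p_0>p_1$, H\"older's inequality gives the factor $(\#\mathbb{P}_j^d)^{1/p_1-1/p_0}=2^{(j+L)d(1/p_1-1/p_0)}$ while $d(1/p_0-1/p_1)_+=0$. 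In both cases the exponents of $2^j$ are arranged so that the target scale weight $2^{j(\tau-\epsilon-d/p_1)}$ equals $2^{-j\epsilon}$ times the source scale weight $2^{j(\sigma-d/p_0)}$, up to a constant depending only on $L,d,p_0,p_1$. Summing over the at most $2^d$ genders $G\in G^j$ (all quasinorms over a set of cardinality $\le2^d$ being mutually comparable with constants depending only on $d,q_0,q_1$) reduces matters to the scalar inequality $\|(2^{-j\epsilon}c_j)_{j\ge0}\|_{\ell^{q_1}}\lesssim\|(c_j)_{j\ge0}\|_{\ell^{q_0}}$ for nonnegative sequences: when $q_0\le q_1$ this is immediate from $\ell^{q_0}\hookrightarrow\ell^{q_1}$ and $2^{-j\epsilon}\le1$, and when $q_0>q_1$ it follows from H\"older's inequality together with $\sum_{j\ge0}2^{-j\epsilon q_1 r}<\infty$, where $r$ is the conjugate exponent of $q_0/q_1$. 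Obvious modifications (suprema in place of sums) cover the cases $p_i=\infty$ or $q_i=\infty$.

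Finally, the two mixed embeddings follow by composition: specialising the main embedding to $(p_1,q_1)=(2,2)$ and invoking $B_{2,2}^{\cdot}=H_2^{\cdot}$ gives $B_{p,q}^{\tau+d(1/p-1/2)_+}(\T^d)\subset H_2^{\tau-\epsilon}(\T^d)$, while specialising to $(p_0,q_0)=(2,2)$ gives $H_2^{\tau+d(1/2-1/p)_+}(\T^d)\subset B_{p,q}^{\tau-\epsilon}(\T^d)$. The bulk of the argument is thus routine: H\"older's inequality and geometric-series bounds at the level of sequence spaces, plus the finiteness of $\#G^j$ and $\#\mathbb{P}_j^d$. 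The step I expect to demand the most care --- or an outright citation --- is the identification $B_{2,2}^\tau=H_2^\tau$, since it is the only place where the purely discrete wavelet description must be reconciled with the Fourier-transform definition of the Sobolev norm, which requires the frequency localisation of the wavelet Littlewood--Paley pieces.
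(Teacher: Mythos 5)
Your proof is correct, but it is worth noting that the paper itself offers no proof at all: Proposition \ref{prop:embed} is stated as a citation to Schmeisser--Triebel (pp.~164--170), where the embeddings are established Fourier-analytically from the dyadic-decomposition definition of $B_{p,q}^\tau(\T^d)$. You instead derive everything from the wavelet isomorphism of Theorem \ref{theo_triebel}, reducing each embedding to elementary inequalities between the sequence spaces $b_{p,q}^{\tau,\mathrm{per}}$ ($\ell^{p_0}\hookrightarrow\ell^{p_1}$ or H\"older on the finite translation set $\mathbb{P}_j^d$, then a geometric-series/H\"older argument in $j$ exploiting the $\epsilon$-loss), which is a clean and self-contained route given that the paper already assumes the wavelet characterization; your exponent bookkeeping in both cases $p_0\le p_1$ and $p_0>p_1$ checks out, as does the derivation of the two mixed embeddings by specializing to $(2,2)$. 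The one place where your argument leans on more than Theorem \ref{theo_triebel} is the identification $B_{2,2}^\tau(\T^d)=H_2^\tau(\T^d)$: your phrase ``uniformly frequency-localised near $|\bm{n}|\asymp 2^j$'' is an overstatement for compactly supported wavelets, whose Fourier transforms are merely concentrated there with decay governed by the regularity $u$ and the vanishing moments, so the passage from $\sum_j 2^{2j\tau}\lVert P_jf\rVert_{L_2}^2$ to the Fourier-coefficient form of $\lVert f\rVert_{H_2^\tau}$ genuinely requires an almost-orthogonality (Schur-test) argument rather than Plancherel alone. You correctly flag this as the step to cite, and citing it is exactly what the paper does for the whole proposition; with that citation in place your argument is complete. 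In exchange for this one imported fact, your approach buys transparent, quantitative embedding constants and works uniformly in the quasi-Banach range $p,q<1$, which is the regime the paper actually needs for the stable noises.
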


In the sequel, we use a simplified version of Theorem \ref{theo_triebel}, making the connection with Sobolev spaces. Before stating this result, we recall the duals of Besov spaces.  For $0<p<\infty$, the conjugate $p'$ is the solution of $1/p + 1/p' = 1$ if $p\geq 1$, and $p' = \infty$ if $p<1$. 
\begin{theorem}[p.171, \cite{schmeisser87}] \label{theo:dual}
Let $\tau\in\mathbb{R}$, $0<p,q<\infty$. Then, we have
\begin{itemize}
\item $\left(B_{p,q}^\tau(\mathbb{T}^d) \right)' = B_{p',q'}^{-\tau}(\T^d)$ if $1\leq p <\infty$,
\item $\left( B_{p,q}^\tau(\mathbb{T}^d)\right)' = B_{\infty,q'}^{-\tau+d(1/p-1)}(\mathbb{T}^d)$ if $0<p<1$.
\end{itemize}
\end{theorem}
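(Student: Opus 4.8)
The plan is to derive Theorem~\ref{theo:dual} from the wavelet isomorphism of Theorem~\ref{theo_triebel}, which transfers the duality question from the (quasi-)Banach spaces $B_{p,q}^{\tau}(\T^d)$ to the explicit sequence spaces $b_{p,q}^{\tau,\mathrm{per}}$. Fix $\tau \in \R$ and $0 < p,q < \infty$, and choose the wavelet smoothness $u \in \N$ large enough that the map $I$ of Theorem~\ref{theo_triebel} is simultaneously an isomorphism onto $b_{p,q}^{\tau,\mathrm{per}}$ and onto the candidate target sequence space (this only requires $u$ to exceed a finite quantity depending on $\tau,p,d$). Since a topological isomorphism induces a topological isomorphism of duals, it then suffices to compute $(b_{p,q}^{\tau,\mathrm{per}})'$ and to carry the identification back through $I$, checking that the abstract dual pairing matches the bilinear distributional pairing $\innprod{\cdot}{\cdot}$ on $\S'(\T^d)\times\S(\T^d)$; this last matching is legitimate precisely because, for $p,q<\infty$, trigonometric polynomials --- equivalently, finitely supported wavelet sequences --- are dense in $B_{p,q}^{\tau}(\T^d)$.

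The sequence-space computation is then elementary bookkeeping. Within a fixed block $(j,G)$ the translation index $m$ runs over the finite set $\mathbb{P}_j^d$, so $b_{p,q}^{\tau,\mathrm{per}}$ is a weighted $\ell^q$-sum over $(j,G)$, with weight $2^{j(\tau-d/p)}$, of the spaces $\ell^p(\mathbb{P}_j^d)$. For $1\leq p<\infty$ one has $(\ell^p(\mathbb{P}_j^d))'=\ell^{p'}(\mathbb{P}_j^d)$ isometrically, and the dual of a weighted $\ell^q$-sum ($q<\infty$) is the weighted $\ell^{q'}$-sum with reciprocal weights; applying Hölder's inequality in $m$ and then in $(j,G)$, and keeping track of the factor $2^{-(j+L)d}$ that the wavelet normalization $2^{-(j+L)d/2}$ contributes to the pairing $\innprod{f}{g}$ (each of $f$ and $g$ carrying one factor $2^{-(j+L)d/2}$), one finds the dual weight $2^{j(-d-\tau+d/p)}$, which equals $2^{j(-\tau-d/p')}$ since $d/p+d/p'=d$; hence $(b_{p,q}^{\tau,\mathrm{per}})'=b_{p',q'}^{-\tau,\mathrm{per}}$, which is the first bullet. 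For $0<p<1$ the only change is $(\ell^p(\mathbb{P}_j^d))'=\ell^{\infty}(\mathbb{P}_j^d)$ isometrically, using the block inequality $\norm{\cdot}_{\ell^1}\leq\norm{\cdot}_{\ell^p}$; the effect is to replace the term $d/p'=d(1-1/p)$ in the weight by $0$, so the identical computation yields the shifted smoothness $-d-\tau+d/p=-\tau+d(1/p-1)$ and the $\ell^{\infty}$ inner norm, i.e. $b_{\infty,q'}^{-\tau+d(1/p-1),\mathrm{per}}$, which is the second bullet.

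The substance of the proof, and where I expect the real effort to lie, is functional-analytic rather than arithmetic. One must verify that the natural sequence pairing captures \emph{every} continuous linear functional on $B_{p,q}^{\tau}(\T^d)$. For $1\leq p$ this is routine, but for $0<p<1$ or $0<q<1$ the space is only a quasi-Banach space and not locally convex, so Hahn--Banach is unavailable; instead one extracts the candidate sequence $\mu$ by evaluating the functional on the wavelet atoms $2^{-(j+L)d/2}\Psi_{G,m}^{j,\mathrm{per}}$ and then shows $\mu$ lies in the asserted space by testing against finitely supported sequences that are extremal for the relevant $\ell^{\infty}$- and $\ell^{q'}$-norms and passing to the limit using the density noted above. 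One must also check that the resulting functional, a priori an abstract element of the dual, is represented by an actual $g\in\S'(\T^d)$: given $\mu$ in the target sequence space, set $g=\sum \mu_m^{j,G}2^{-(j+L)d/2}\Psi_{G,m}^{j,\mathrm{per}}$, which converges in $\S'(\T^d)$ by Theorem~\ref{theo_triebel} applied to the target space, and verify that $f\mapsto\innprod{f}{g}$ realizes the functional with equivalent norm. Finally I would double-check the edge conventions that do not affect the exponents but do affect constants: the role of $L$, and the fact that \cite{schmeisser87} and the present paper use a bilinear (rather than sesquilinear) duality together with real-valued wavelets, so that no complex conjugations appear but some care is needed when the coefficients $\lambda_m^{j,G}$ are allowed to be complex.
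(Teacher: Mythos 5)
The paper does not prove this statement at all: it is imported verbatim from \cite{schmeisser87} (p.~171) as a known duality theorem, so there is no in-paper argument to compare against. Judged on its own terms, your proposal is correct and is the standard modern route: transfer the duality through the wavelet isomorphism $I$ of Theorem~\ref{theo_triebel} and compute the dual of the weighted mixed-norm sequence space $b_{p,q}^{\tau,\mathrm{per}}$. Your bookkeeping checks out: with $f=\sum\lambda_m^{j,G}2^{-(j+L)d/2}\Psi_{G,m}^{j,\mathrm{per}}$ and $g=\sum\mu_m^{j,G}2^{-(j+L)d/2}\Psi_{G,m}^{j,\mathrm{per}}$, orthonormality gives $\innprod{f}{g}=\sum\lambda_m^{j,G}\mu_m^{j,G}2^{-(j+L)d}$, and splitting $2^{-jd}=2^{j(\tau-d/p)}\cdot 2^{j(-\tau-d+d/p)}$ with $-d+d/p=-d/p'$ yields the weight of $b_{p',q'}^{-\tau,\mathrm{per}}$ for $p\geq 1$, while for $p<1$ the block bound $\norm{\cdot}_{\ell^1}\leq\norm{\cdot}_{\ell^p}$ replaces $d/p'$ by $0$ and gives $b_{\infty,q'}^{-\tau+d(1/p-1),\mathrm{per}}$. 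You also correctly locate the real work in the surjectivity step for the quasi-Banach range, where Hahn--Banach fails and one must instead evaluate the functional on coordinate vectors and use converse-H\"older test sequences; that argument does go through because finitely supported sequences are dense in $b_{p,q}^{\tau,\mathrm{per}}$ for $p,q<\infty$. Note, by contrast, that the proof in \cite{schmeisser87} predates the wavelet characterization and works with Fourier-analytic (dyadic trigonometric-polynomial) decompositions; the skeleton is the same, so yours is an equivalent but more transparent derivation. One small point to tighten: Theorem~\ref{theo_triebel} as quoted only asserts unconditional convergence in $\S'(\T^d)$ and in $B_{p,q}^{\tau'}(\T^d)$ for $\tau'<\tau$, so the density of finite wavelet sums in $B_{p,q}^{\tau}(\T^d)$ itself should be justified on the sequence-space side (where tails of the $b_{p,q}^{\tau,\mathrm{per}}$ norm vanish for $p,q<\infty$) and pulled back through $I$; this is immediate but worth stating, since the whole identification of the abstract dual with the distributional pairing rests on it.
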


\begin{figure}[h!] 
\centering
\begin{tikzpicture}[x=4cm,y=3cm,scale=0.75]

\draw[very thick, ->] (-1,0)--(0.5,0) node[circle,right] {$\frac{1}{p}$} ;
\draw[very thick, <->] (-1,-1)--(-1,0.5) node[circle,above] {$\tau$} ;

\draw[]
(-0.5,-0.5) 
node[black, circle, minimum width=4pt, fill,fill opacity=1,  inner sep=0pt]{}
node[black, below] {$(1/p_0,\tau_0) $}
; 

\draw[ thick,color=black]
(0,0.05) -- (0,-0.05)  node[black, below] { $1$};
\end{tikzpicture}
\hspace*{1cm}
\begin{tikzpicture}[x=4cm,y=3cm,scale=0.75]

\draw[very thick, ->] (-1,0)--(0.5,0) node[circle,right] {$\frac{1}{p}$} ;
\draw[very thick, <->] (-1,-1)--(-1,0.5) node[circle,above] {$\tau$} ;

\fill[red, opacity= 0.4] (-1,-0.5) -- (0,0.25) -- (0.5,0.25) -- (0.5,-1) -- (-1,-1) -- cycle; 
\draw[red, very thick, dashed,->](0,0.25) --(0.5,0.25);
\draw[red, very thick, dashed,- ](-1,-0.5) -- (0,0.25);
\draw[red, very thick,->](-1,-0.5) -- (-1,-1);

\draw[]
(0,0.25) 
node[black, circle, minimum width=4pt, fill,fill opacity=1,  inner sep=0pt]{}
node[black, above] {$B_{p_0,q_0}^{\tau_0}(\T^d) $};

\end{tikzpicture}

\caption{ On the left: this diagram provides a graphic representation of Besov spaces, including the Sobolev spaces.  The vertical axis corresponds to the smoothness parameter $\tau$, while the horizontal axis corresponds to the inverse of the ``singularity'' parameter $p$.  A given point $(1/p_0,\tau_0)$ represents the spaces  $B_{p_0,q}^{\tau_0}(\T^d)$  for $0< q \leq \infty$. 
On the right: this diagram illustrates the second item of Proposition \ref{prop:embed}. The plotted Besov space $B_{p_0,q_0}^{\tau_0}(\T^d)$ at $(1/p_0,\tau_0)$ embeds in all of the Besov spaces $B_{p_1,q_1}^{\tau_1}(\T^d)$ below the Sobolev embedding line (with slope $d$) for $p_1 \geq p_0$, and it embeds in the spaces $B_{p_1,q_1}^{\tau_1}(\T^d)$ below the line of slope $0$  for $p_1<p_0$.} 
\label{fig_Besov}
\end{figure}
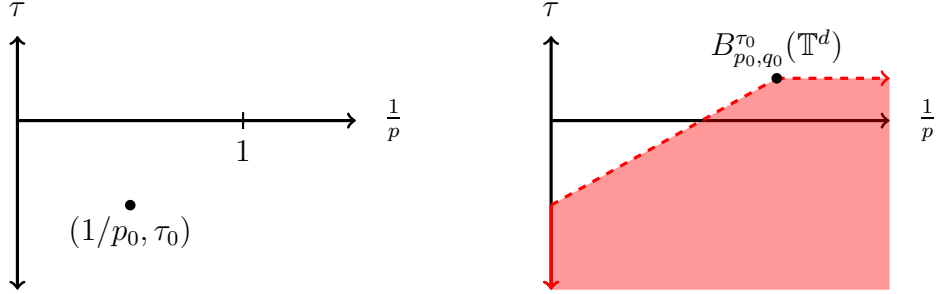

\begin{corollary}  \label{theo:isomorphism}
Let $\tau \in \R$, $0<p \leq \infty$, $\epsilon>0$, and $u\in \N$ such that $u>\max(\tau, d(1/p-1/2)_+-\tau+\epsilon)$. Let $\{\Psi_{G,m}^{j,\mathrm{per}}\}$ be an orthonormal basis of $L_2(\mathbb{T}^d)$  with regularity $u$. Then, $B_{p,q}^{\tau}(\T^d)$ is the subset of $H_2^{\tau-d(1/p-1/2)_+-\epsilon}(\T^d) $ corresponding to functions $f$ satisfying
\begin{equation*}
	\set{   \innprod{f}{2^{(j+L)d/2}\Psi_{G,m}^{j,\mathrm{per}} }  } \in b_{p,q}^{\tau,\mathrm{per}}.
\end{equation*}
\end{corollary}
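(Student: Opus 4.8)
The plan is to read the statement off from Theorem~\ref{theo_triebel} and Proposition~\ref{prop:embed}, the only genuine point being to check that the standing hypothesis on $u$ is strong enough to apply the wavelet characterization simultaneously to $B_{p,q}^{\tau}(\T^d)$ and to the target Sobolev space. Set $\sigma := \tau - d(1/p-1/2)_+ - \epsilon$, so that the assertion is the set equality, inside $\S'(\T^d)$,
\begin{equation*}
	B_{p,q}^{\tau}(\T^d)
	= \setb{ f \in H_2^{\sigma}(\T^d) }{ \set{ \innprod{f}{ 2^{(j+L)d/2}\Psi_{G,m}^{j,\mathrm{per}} } } \in b_{p,q}^{\tau,\mathrm{per}} }.
\end{equation*}
First I would record the inclusion of the underlying spaces: applying the embedding $B_{p,q}^{\tau+d(1/p-1/2)_+}(\T^d)\subset H_2^{\tau-\epsilon}(\T^d)$ of Proposition~\ref{prop:embed} with the smoothness parameter $\tau - d(1/p-1/2)_+$ in place of $\tau$ gives $B_{p,q}^{\tau}(\T^d) \subset H_2^{\sigma}(\T^d)$. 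Hence every $f\in B_{p,q}^{\tau}(\T^d)$ is already a bona fide element of $H_2^{\sigma}(\T^d)$, and its coefficients $\innprod{f}{2^{(j+L)d/2}\Psi_{G,m}^{j,\mathrm{per}}}$ are the duality products on $H_2^{\sigma}(\T^d) = (H_2^{-\sigma}(\T^d))'$; these coincide with the coefficients produced by Theorem~\ref{theo_triebel}, since in both settings they are the \emph{same} pairing of $f$ against the wavelet $2^{(j+L)d/2}\Psi_{G,m}^{j,\mathrm{per}}$, which lies in $H_2^{-\sigma}(\T^d)$ because of its regularity $u > -\sigma$.

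Next I would verify the regularity bounds. To invoke Theorem~\ref{theo_triebel} for $B_{p,q}^{\tau}(\T^d)$ one needs $u > \max(\tau,\, d(1/p-1)_+ - \tau)$; since $(1/p-1)_+ \leq (1/p-1/2)_+$, the hypothesis $u > \max(\tau,\, d(1/p-1/2)_+ - \tau + \epsilon)$ is more than enough. To invoke it for $H_2^{\sigma}(\T^d) = B_{2,2}^{\sigma}(\T^d)$ one needs $u > \max(\sigma, -\sigma) = |\sigma|$; but $\sigma \leq \tau < u$ and $-\sigma = -\tau + d(1/p-1/2)_+ + \epsilon < u$, again by hypothesis, so this holds as well. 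Consequently Theorem~\ref{theo_triebel} applies in both cases: $f \in B_{p,q}^{\tau}(\T^d)$ (respectively $f \in H_2^{\sigma}(\T^d)$) if and only if $f$ equals the sum of its wavelet series, with coefficient sequence in $b_{p,q}^{\tau,\mathrm{per}}$ (respectively in $b_{2,2}^{\sigma,\mathrm{per}}$), the series converging unconditionally in $\S'(\T^d)$.

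Finally I would assemble the two directions. If $f \in B_{p,q}^{\tau}(\T^d)$, then $f \in H_2^{\sigma}(\T^d)$ by the embedding above and its coefficient sequence lies in $b_{p,q}^{\tau,\mathrm{per}}$ by Theorem~\ref{theo_triebel}; this is the inclusion $\subset$. Conversely, suppose $f \in H_2^{\sigma}(\T^d)$ with $\set{\innprod{f}{2^{(j+L)d/2}\Psi_{G,m}^{j,\mathrm{per}}}} \in b_{p,q}^{\tau,\mathrm{per}}$, and form $g := \sum_{j,G,m} \innprod{f}{2^{(j+L)d/2}\Psi_{G,m}^{j,\mathrm{per}}}\, 2^{-(j+L)d/2}\Psi_{G,m}^{j,\mathrm{per}}$. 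By Theorem~\ref{theo_triebel} applied to $B_{p,q}^{\tau}(\T^d)$ this series converges in $\S'(\T^d)$ to an element $g \in B_{p,q}^{\tau}(\T^d)$; by Theorem~\ref{theo_triebel} applied to $H_2^{\sigma}(\T^d)$ the very same series converges in $\S'(\T^d)$ to $f$. Since $\S'(\T^d)$ is Hausdorff, $f = g \in B_{p,q}^{\tau}(\T^d)$, which is the inclusion $\supset$.

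The only real work sits in the chain of inequalities on $u$: the hypothesis on $u$ appears to be tailored precisely so that both applications of Theorem~\ref{theo_triebel} are legitimate, and I would expect checking this to be the main (though routine) obstacle. The other point to be careful about is the identification in the last paragraph — that ``the wavelet coefficient of $f$'' is one and the same object whether $f$ is regarded as living in $H_2^{\sigma}(\T^d)$ or in $B_{p,q}^{\tau}(\T^d)$, and that the two wavelet expansions have the same $\S'(\T^d)$-limit; without this the two descriptions of $f$ could not be matched.
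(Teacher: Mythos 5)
Your argument is correct and follows essentially the same route as the paper's (much terser) proof: check that the hypothesis on $u$ yields the three inequalities $u>\tau$, $u>d(1/p-1)_+-\tau$, and $u>d(1/p-1/2)_+-\tau+\epsilon$, the first two licensing Theorem~\ref{theo_triebel} and the third placing the wavelets in the dual of $H_2^{\tau-d(1/p-1/2)_+-\epsilon}(\T^d)$, together with the embedding $B_{p,q}^{\tau}(\T^d)\subset H_2^{\tau-d(1/p-1/2)_+-\epsilon}(\T^d)$ from Proposition~\ref{prop:embed}. Your write-up simply makes explicit the coefficient identification and the two-inclusion assembly that the paper leaves implicit.
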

\begin{proof}
The inequality $u>\max(\tau, d(1/p-1/2)_+-\tau+\epsilon)$ implies that $u$ is greater than the three quantities $\tau$, $d\left(1/p-1\right)_+-\tau$, and $d(1/p-1/2)_+-\tau+\epsilon$.
The first two conditions are required to apply Theorem \ref{theo_triebel}. The last condition guarantees that the wavelets are included in the dual of the Sobolev space $H_2^{\tau-d(1/p-1/2)_+-\epsilon}(\T^d)$.
\end{proof}

\begin{corollary}  \label{theo:isomorphism_2}
Let $\epsilon>0$, $f\in H_2^{-d/2-\epsilon}(\T^d)$, $\tau \in \R$, $0<p,q \leq \infty$, and $u\in \N$ such that 
$u>\max(d/2+\epsilon,\tau, d(1/p-1/2)_+-\tau+\epsilon)$. Let $\{\Psi_{G,m}^{j,\mathrm{per}}\}$ be an orthonormal basis of $L_2(\mathbb{T}^d)$  with regularity $u$. Then $f\in B_{p,q}^{\tau}(\T^d)$ iff
\begin{equation}
	\set{  \innprod{f}{2^{(j+L)d/2}\Psi_{G,m}^{j,\mathrm{per}} }  } 
	\in 
	b_{p,q}^{\tau,\mathrm{per}}.
\end{equation}
\end{corollary}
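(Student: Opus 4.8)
The plan is to deduce the statement from Corollary~\ref{theo:isomorphism} by applying that result with a suitably \emph{enlarged} value of its free parameter~$\epsilon$. Corollary~\ref{theo:isomorphism} characterizes $B_{p,q}^\tau(\T^d)$ as the set of functions in the Sobolev space $H_2^{\tau-d(1/p-1/2)_{+}-\epsilon}(\T^d)$ whose wavelet coefficients belong to $b_{p,q}^{\tau,\mathrm{per}}$, but the order of that ambient Sobolev space varies with $p$ and $\tau$, whereas here we insist on the single, fixed space $H_2^{-d/2-\epsilon}(\T^d)$. Accordingly I would set
\[
	\epsilon' := \max\!\big(\epsilon,\ \epsilon+\tau-d(1/p-1/2)_{+}+d/2\big)\ \ge\ \epsilon\ >\ 0 ,
\]
which is arranged precisely so that $\tau-d(1/p-1/2)_{+}-\epsilon' = \min\!\big(\tau-d(1/p-1/2)_{+}-\epsilon,\ -d/2-\epsilon\big) =: \sigma^{\ast} \le -d/2-\epsilon$, and hence $H_2^{-d/2-\epsilon}(\T^d)\subset H_2^{\sigma^{\ast}}(\T^d)$.

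Next I would verify that $\epsilon'$ still meets the regularity requirement of Corollary~\ref{theo:isomorphism}. Because $d(1/p-1/2)_{+}-\tau+\epsilon' = \max\!\big(d(1/p-1/2)_{+}-\tau+\epsilon,\ d/2+\epsilon\big)$, the hypothesis $u>\max\!\big(d/2+\epsilon,\ \tau,\ d(1/p-1/2)_{+}-\tau+\epsilon\big)$ is exactly $u>\max\!\big(\tau,\ d(1/p-1/2)_{+}-\tau+\epsilon'\big)$; this is the one place where the assumption $u>d/2+\epsilon$ --- the only hypothesis not already present in Corollary~\ref{theo:isomorphism} --- is used, and it is also what makes each $\Psi_{G,m}^{j,\mathrm{per}}$ lie in $C^u(\T^d)\subset H_2^{d/2+\epsilon}(\T^d)=(H_2^{-d/2-\epsilon}(\T^d))'$, so that the pairings $\innprod{f}{2^{(j+L)d/2}\Psi_{G,m}^{j,\mathrm{per}}}$ appearing in the statement are well defined for every $f\in H_2^{-d/2-\epsilon}(\T^d)$. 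Corollary~\ref{theo:isomorphism}, invoked with $\epsilon'$ in place of $\epsilon$, then asserts that $B_{p,q}^\tau(\T^d)$ equals the set of $g\in H_2^{\sigma^{\ast}}(\T^d)$ whose coefficient sequence $\{\innprod{g}{2^{(j+L)d/2}\Psi_{G,m}^{j,\mathrm{per}}}\}$ lies in $b_{p,q}^{\tau,\mathrm{per}}$. Restricting this equivalence to $f\in H_2^{-d/2-\epsilon}(\T^d)\subset H_2^{\sigma^{\ast}}(\T^d)$ yields both implications: if $f\in B_{p,q}^\tau(\T^d)$ then its coefficient sequence is in $b_{p,q}^{\tau,\mathrm{per}}$, and conversely if $f\in H_2^{-d/2-\epsilon}(\T^d)$ has coefficient sequence in $b_{p,q}^{\tau,\mathrm{per}}$ then $f\in B_{p,q}^\tau(\T^d)$.

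Finally, it is instructive to note the two regimes. If $\tau\ge d(1/p-1/2)_{+}-d/2$, then $\sigma^{\ast}=-d/2-\epsilon$ and $\epsilon'>\epsilon$, so the argument genuinely enlarges the ambient Sobolev space of Corollary~\ref{theo:isomorphism}; here, moreover, Proposition~\ref{prop:embed} already gives $B_{p,q}^\tau(\T^d)\subset H_2^{-d/2-\epsilon}(\T^d)$, so the standing hypothesis $f\in H_2^{-d/2-\epsilon}(\T^d)$ holds automatically for every element of $B_{p,q}^\tau(\T^d)$. If $\tau< d(1/p-1/2)_{+}-d/2$, then $\sigma^{\ast}=\tau-d(1/p-1/2)_{+}-\epsilon$ and $\epsilon'=\epsilon$, so Corollary~\ref{theo:isomorphism} applies verbatim, using $H_2^{-d/2-\epsilon}(\T^d)\subset H_2^{\tau-d(1/p-1/2)_{+}-\epsilon}(\T^d)$. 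The only point requiring care is the parameter bookkeeping in the second paragraph --- checking that inflating $\epsilon$ to $\epsilon'$ does not break the regularity constraint of Corollary~\ref{theo:isomorphism}; once that is in place, the rest is a routine comparison of Sobolev orders.
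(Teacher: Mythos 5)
Your proposal is correct and follows essentially the same route as the paper: both arguments identify the common ambient Sobolev space $H_2^{\tau_1}(\T^d)$ with $\tau_1=\min\bigl(-d/2-\epsilon,\ \tau-d(1/p-1/2)_{+}-\epsilon\bigr)$ (your $\sigma^{\ast}$), observe that it contains both $H_2^{-d/2-\epsilon}(\T^d)$ and $B_{p,q}^{\tau}(\T^d)$, and then invoke Corollary \ref{theo:isomorphism} in that space. Your explicit $\epsilon'$ bookkeeping simply spells out the parameter check (and the role of the extra hypothesis $u>d/2+\epsilon$) that the paper's shorter proof leaves implicit.
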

\begin{proof}
We have the inclusion $f\in H_2^{-d/2-\epsilon}(\mathbb{T}^d)\subset H_2^{\tau_1}(\mathbb{T}^d)$, 
where 
\begin{equation}
\tau_1 = \min\set{-d/2-\epsilon,\tau-d \parenth{\frac{1}{p}-\frac{1}{2}}_+-\epsilon}.
\end{equation} 
Therefore, Theorem \ref{theo_triebel} implies that $f$ has a wavelet expansion with coefficients in $b_{2,2,d}^{\tau_1,\mathrm{per}}$. We also know that $B_{p,q}^{\tau}(\mathbb{T}^d)\subset H_2^{\tau_1}(\mathbb{T}^d)$, so we apply Corollary \ref{theo:isomorphism} to complete the proof. 
\end{proof}
This corollary can be summarized as follows. We have a function $f$ in $H_2^{-d/2-\epsilon}(\mathbb{T}^d)$, and we want to know if it is in a given Besov space $B_{p,q}^{\tau}(\T^d)$. We therefore determine a Sobolev space $H_2^{\tau_1}(\mathbb{T}^d)$ that both $H_2^{-d/2-\epsilon}(\mathbb{T}^d)$ and $B_{p,q}^{\tau}(\T^d)$ are embedded in.  The coefficients of the wavelet basis for $H_2^{\tau_1}(\mathbb{T}^d)$ then characterize the $B_{p,q}^{\tau}(\T^d)$ smoothness.

\begin{proposition}[p.164, \cite{schmeisser87}] \label{proposition_dirac}
If $\tau<d(1/p-1)$ and $0 < p,q \leq \infty $, then the Dirac distribution $\delta$ is in the Besov space $B_{p,q}^{\tau}(\T^d)$.
\end{proposition}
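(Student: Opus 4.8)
The plan is to apply the wavelet characterization of Besov spaces in the form of Corollary~\ref{theo:isomorphism_2}. For this I need two ingredients: that $\delta$ lies a priori in a Sobolev space $H_2^{-d/2-\epsilon}(\T^d)$, and that, for a fixed sufficiently smooth wavelet system, the coefficients $\innprod{\delta}{2^{(j+L)d/2}\Psi_{G,m}^{j,\mathrm{per}}}$ form a sequence in $b_{p,q}^{\tau,\mathrm{per}}$. Without loss of generality I place the Dirac mass at the origin, so that $\innprod{\delta}{\varphi}=\varphi(0)$.

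First I would check that $\delta\in H_2^{-d/2-\epsilon}(\T^d)$ for every $\epsilon>0$: the Fourier coefficients $\innprod{\delta}{\ue^{-2\pi\ui\langle\bm{n},\cdot\rangle}}$ all have modulus $1$, so $\norm{\delta}_{H_2^{\sigma}}^2=\sum_{\bm{n}\in\Z^d}(1+\abs{\bm{n}})^{2\sigma}$, which is finite exactly when $\sigma<-d/2$; taking $\sigma=-d/2-\epsilon$ works. In particular Theorem~\ref{theo_triebel} applies with $p=q=2$ and smoothness $-d/2-\epsilon$, and by uniqueness of the expansion the wavelet coefficients of $\delta$ are (using that the wavelets are continuous) $\lambda_m^{j,G}=\innprod{\delta}{2^{(j+L)d/2}\Psi_{G,m}^{j,\mathrm{per}}}=2^{(j+L)d/2}\Psi_{G,m}^{j,\mathrm{per}}(0)$.

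Next I would estimate these coefficients using the structure of Triebel's periodic wavelets. Fix $\epsilon>0$ small and $u\in\N$ with $u>\max(d/2+\epsilon,\tau,d(1/p-1/2)_{+}-\tau+\epsilon)$, and work with the corresponding wavelet system. Because $\Psi_{G,m}^{j,\mathrm{per}}$ is $L_2$-normalized and is the periodization of a dilated and translated copy of a fixed compactly supported mother wavelet, one has the uniform bound $\norm{\Psi_{G,m}^{j,\mathrm{per}}}_{L_\infty}\leq C\,2^{(j+L)d/2}$ with $C=C(d,u)$; and for each scale $j$ and gender $G$ the supports (mod $\Z^d$) of the $2^{(j+L)d}$ functions $\Psi_{G,m}^{j,\mathrm{per}}$, $m\in\mathbb{P}_j^d$, contain the origin for at most $N=N(d,u)$ values of $m$. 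Hence $\abs{\lambda_m^{j,G}}\leq C\,2^{(j+L)d}$ for all $(j,G,m)$, and $\lambda_m^{j,G}=0$ for all but at most $N$ indices $m$ at each $(j,G)$. Plugging this into the Besov sequence norm and bounding the at most $2^d$ genders trivially gives, for $0<p,q<\infty$,
\begin{equation*}
	\norm{\lambda}_{b_{p,q}^{\tau,\mathrm{per}}}^q
	\leq
	C'\sum_{j=0}^{\infty} 2^{j(\tau-d/p)q}\,2^{(j+L)dq}
	=
	C''\sum_{j=0}^{\infty} 2^{jq(\tau-d/p+d)},
\end{equation*}
which converges precisely when $\tau-d/p+d<0$, i.e. $\tau<d(1/p-1)$; the cases $p=\infty$ and/or $q=\infty$ follow in the same way after replacing the relevant $\ell^p$/$\ell^q$ sums by suprema and yield the same strict inequality. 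With $\lambda\in b_{p,q}^{\tau,\mathrm{per}}$ established, Corollary~\ref{theo:isomorphism_2} applies (its smoothness hypothesis being exactly how we chose $u$) and gives $\delta\in B_{p,q}^{\tau}(\T^d)$.

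The main obstacle is the third step: extracting from Triebel's construction the uniform sup-norm bound $\norm{\Psi_{G,m}^{j,\mathrm{per}}}_{L_\infty}\lesssim 2^{(j+L)d/2}$ together with the ``bounded overlap at a point'' statement. These are elementary consequences of compact support and $L_2$-normalization, but they must be phrased carefully — in particular one must verify that the periodization causes no trouble, which is precisely why the shift $L$ forcing the scale-$0$ wavelets inside the torus is used. Everything else is a routine geometric-series estimate.
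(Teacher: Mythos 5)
Your proposal is correct, but be aware that the paper itself does not prove this proposition: it is imported from Schmeisser--Triebel (p.\ 164), where it is obtained from the Fourier-analytic (Littlewood--Paley) description of periodic Besov spaces---all Fourier coefficients of $\delta$ equal one, and the $L_p(\T^d)$-norms of the associated trigonometric kernels grow like $2^{jd(1-1/p)}$. You instead re-derive the statement inside the paper's own wavelet machinery: $\delta\in H_2^{-d/2-\epsilon}(\T^d)$ by the Fourier-coefficient computation, $\lambda_m^{j,G}=2^{(j+L)d/2}\Psi_{G,m}^{j,\mathrm{per}}(0)$, the bound $\abs{\lambda_m^{j,G}}\lesssim 2^{(j+L)d}$ with only boundedly many nonvanishing translates per $(j,G)$, and a geometric series converging exactly when $\tau<d(1/p-1)$, after which Corollary \ref{theo:isomorphism_2} gives membership; the exponent arithmetic, including the $p=\infty$ and $q=\infty$ modifications, is right. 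The two facts you isolate---$\norm{\Psi_{G,m}^{j,\mathrm{per}}}_{L_\infty}\lesssim 2^{jd/2}$ and bounded overlap at a point---are indeed standard for periodized compactly supported Daubechies-type wavelets at scales where the support fits inside one period (which is what $L$ guarantees), with constants depending on $u$ and $d$; one small point to make explicit is that $\innprod{\delta}{\Psi_{G,m}^{j,\mathrm{per}}}=\Psi_{G,m}^{j,\mathrm{per}}(0)$ should be justified via the $H_2^{-d/2-\epsilon}$--$H_2^{d/2+\epsilon}$ duality (available since $u>d/2+\epsilon$), where it equals the absolutely convergent Fourier series of the wavelet evaluated at the origin. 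Compared with the citation, your route has the advantage of staying entirely within the tools the paper sets up (Theorem \ref{theo_triebel}, Corollary \ref{theo:isomorphism_2}), of being the same coefficient computation that drives the proof of Theorem \ref{theo:regu_stable}, of extending verbatim to shifted Diracs and hence to the compound Poisson noise in Proposition \ref{prop:Poisson}, and of showing why the inequality is strict (at $\tau=d(1/p-1)$ the series only remains bounded when $q=\infty$); the citation route is shorter but rests on kernel $L_p$-estimates not otherwise used in the paper.
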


\section{Besov Spaces are Measurable} \label{measurable}

The framework of generalized stochastic processes allows us to define very general processes, 
such as white noises or (weak) derivatives of white noises of any order. 
Consequently, the problem of existence of a solution of a stochastic partial differential equation is more easily solved than in the classical framework. 
However, we only know that our solution is a random element in the whole space $\S'(\T^d)$ of periodic generalized functions. 
In particular, we  know \emph{a priori} nothing about the regularity of a generalized stochastic process. 
We would like to say, for instance, that a process is continuous if its sample paths are almost surely (a.s.) in the space of continuous functions on the torus. 
More generally, we would like to understand the regularity in terms of Besov spaces. 
However, to ask the question ``Is my process in a given Besov space with probability one?'', 
we first need to show that this question is well-posed. 
This requires us to show that Besov spaces are measurable subspaces of the space of generalized functions. 

\begin{theorem}\label{theorem:measurable} (Besov spaces are measurable.)
For every $0<p,q \leq +\infty$ and every $\tau \in\R$, we have
\begin{equation} \label{eq:besov_measurable}
	B_{p,q}^\tau(\T^d) 
	\in 
	\mathcal{B}_c(\S'(\T^d)),
\end{equation}
where we remind the reader that $\mathcal{B}_c(\S'(\T^d))$ is the cylindrical $\sigma$-field of $\S'(\T^d) $.
\end{theorem}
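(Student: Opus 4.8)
The plan is to exhibit $B_{p,q}^\tau(\T^d)$ as a set obtained from the cylindrical sets by countably many operations, using the wavelet characterization as the bridge between the analytic definition of the Besov space and the cylindrical structure of $\S'(\T^d)$. The key observation is that each wavelet $\Psi_{G,m}^{j,\mathrm{per}}$ is a fixed element of $\S(\T^d)$, so for each triple $(j,G,m)$ the map $u \mapsto \innprod{u}{2^{(j+L)d/2}\Psi_{G,m}^{j,\mathrm{per}}}$ is, by the very definition of the cylindrical $\sigma$-field, a $\mathcal{B}_c(\S'(\T^d))$-measurable scalar function. Hence the countable family of all wavelet coefficients is jointly measurable, and any condition on these coefficients described through countably many Borel operations on $\R$ (or $\C$) will cut out a cylindrical-measurable set.

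First I would fix, once and for all, a wavelet system $\{\Psi_{G,m}^{j,\mathrm{per}}\}$ of sufficiently high regularity $u$ — large enough to satisfy the hypotheses of Corollary \ref{theo:isomorphism_2} for the given $(p,q,\tau)$ and some fixed $\epsilon>0$ (e.g. $\epsilon=1$), so that membership in $B_{p,q}^\tau(\T^d)$ can be tested on the wavelet coefficients. The subtlety is that Corollary \ref{theo:isomorphism_2} only characterizes $B_{p,q}^\tau$ \emph{within} the reference Sobolev space $H_2^{-d/2-\epsilon}(\T^d)$; since $B_{p,q}^\tau(\T^d) \subset H_2^{-d/2-\epsilon}(\T^d)$ for this choice of $\epsilon$ (by the embeddings of Proposition \ref{prop:embed}, or directly because Besov spaces of \emph{any} smoothness embed into a sufficiently negative Sobolev space), no function is lost by intersecting with that Sobolev space. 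Therefore I would write
\begin{equation*}
	B_{p,q}^\tau(\T^d)
	=
	H_2^{-d/2-\epsilon}(\T^d)
	\cap
	\setb{ u \in \S'(\T^d) }{ \set{ \innprod{u}{2^{(j+L)d/2}\Psi_{G,m}^{j,\mathrm{per}}} } \in b_{p,q}^{\tau,\mathrm{per}} }
\end{equation*}
and reduce the problem to showing that both sets on the right lie in $\mathcal{B}_c(\S'(\T^d))$.

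For the second set, I would express the condition $\lambda \in b_{p,q}^{\tau,\mathrm{per}}$ as a countable combination of Borel conditions: the quantity $\norm{\lambda}_{b_{p,q}^{\tau,\mathrm{per}}}$ is (when $p,q<\infty$) a countable sum of nonnegative terms, each a continuous function of finitely many coefficients $\lambda_m^{j,G}$; thus $\set{\norm{\lambda}_{b_{p,q}^{\tau,\mathrm{per}}} \leq n}$ is measurable for each $n$, being a countable intersection over partial sums, and the full space is $\bigcup_n$ of these. The cases where $p$ or $q$ equals $\infty$ are handled the same way with suprema replacing sums (a sup over a countable index set of measurable functions is measurable). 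For the Sobolev set $H_2^{-d/2-\epsilon}(\T^d)$, the same argument applies even more directly: its norm is $(\sum_{\bm n \in \Z^d}(1+|\bm n|)^{2(-d/2-\epsilon)}|\innprod{u}{\ue^{-2\pi\ui\innprod{\bm n}{\cdot}}}|^2)^{1/2}$, each complex exponential is in $\S(\T^d)$, so each coefficient map is cylindrical-measurable, and finiteness of the norm is a countable union over $n$ of countable intersections over finite partial sums — hence $H_2^{-d/2-\epsilon}(\T^d) \in \mathcal{B}_c(\S'(\T^d))$.

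The main obstacle — really the only place where care is needed rather than routine bookkeeping — is the logical gap between the \emph{analytic} Besov norm and the \emph{sequence-space} condition on wavelet coefficients: a priori the wavelet characterization (Theorem \ref{theo_triebel}) is stated for $f$ already known to lie in $\S'(\T^d)$ and gives an isomorphism onto $b_{p,q}^{\tau,\mathrm{per}}$, but one must be sure that testing the coefficient condition genuinely recovers exactly $B_{p,q}^\tau(\T^d)$ as a subset of $\S'(\T^d)$, with no spurious elements and none omitted. This is precisely what Corollary \ref{theo:isomorphism_2} supplies, once we observe $B_{p,q}^\tau(\T^d) \subset H_2^{-d/2-\epsilon}(\T^d)$, so the proof amounts to invoking that corollary and then checking the (entirely mechanical) fact that a set defined by the finiteness of a countable sum of continuous functions of coordinate projections is Borel-measurable with respect to the $\sigma$-field those projections generate.
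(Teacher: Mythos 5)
Your argument breaks down at its declared key observation. The wavelets $\Psi_{G,m}^{j,\mathrm{per}}$ of regularity $u\in\N$ furnished by the quoted result of Triebel (Proposition 1.34 of \cite{triebel08}), which are the ones to which Theorem \ref{theo_triebel} and Corollaries \ref{theo:isomorphism}--\ref{theo:isomorphism_2} apply, are compactly supported functions of class $C^u$ only; they are \emph{not} elements of $\S(\T^d)$. Consequently the map $u \mapsto \innprod{u}{2^{(j+L)d/2}\Psi_{G,m}^{j,\mathrm{per}}}$ is not a coordinate functional of the cylindrical $\sigma$-field --- this pairing is not even defined for a general $u\in\S'(\T^d)$ --- so the set $\setb{u\in\S'(\T^d)}{\set{\innprod{u}{2^{(j+L)d/2}\Psi_{G,m}^{j,\mathrm{per}}}}\in b_{p,q}^{\tau,\mathrm{per}}}$ on which your decomposition rests is not known (nor even well defined) to belong to $\mathcal{B}_c(\S'(\T^d))$. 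This is precisely the difficulty the paper's proof is organized around: it first shows that the Sobolev space $H_2^{v}(\T^d)$, with $v=\tau-d(1/p-1/2)_+-\epsilon$, lies in $\mathcal{B}_c(\S'(\T^d))$ using the Fourier exponentials, which genuinely are in $\S(\T^d)$ (this half of your argument is fine); it then applies Lemma \ref{lemma:measurable} \emph{inside} $H_2^{v}(\T^d)$, where the finite-regularity wavelets are admissible test functions, to get $B_{p,q}^\tau(\T^d)\in\mathcal{B}_c(H_2^{v}(\T^d))$; and finally it invokes the nontrivial identification of $\mathcal{B}_c(H_2^{v}(\T^d))$ with the trace of $\mathcal{B}_c(\S'(\T^d))$ on $H_2^{v}(\T^d)$ (Theorem 1.2.4 of \cite{Ito1984foundations}, relying on the countably Hilbert nuclear structure of $\S(\T^d)$), which together with the measurability of $H_2^{v}(\T^d)$ gives $\mathcal{B}_c(H_2^{v}(\T^d))\subset\mathcal{B}_c(\S'(\T^d))$. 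Your proposal silently assumes this transfer is free; it is the crux of the proof. (An alternative repair would be to use a genuinely smooth wavelet basis, e.g.\ periodized Meyer wavelets, but then you would need a Besov characterization theorem for that basis, which is not the one you cite.)

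A secondary slip: you intersect with the fixed space $H_2^{-d/2-\epsilon}(\T^d)$ and invoke Corollary \ref{theo:isomorphism_2}, but for arbitrary $\tau\in\R$ the inclusion $B_{p,q}^\tau(\T^d)\subset H_2^{-d/2-\epsilon}(\T^d)$ fails (take $\tau$ sufficiently negative), so your claimed identity only captures $B_{p,q}^\tau(\T^d)\cap H_2^{-d/2-\epsilon}(\T^d)$. Corollary \ref{theo:isomorphism_2} is tailored to the white-noise application, where the process is already known to lie in $H_2^{-d/2-\epsilon}(\T^d)$; the measurability theorem instead needs Corollary \ref{theo:isomorphism} with the $(\tau,p)$-dependent reference space $H_2^{\tau-d(1/p-1/2)_+-\epsilon}(\T^d)$, as in the paper. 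This slip is easy to fix, but the missing $\sigma$-field transfer described above is not.
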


We prove this result in two steps. First, we focus on the Sobolev spaces $H_2^{v}(\T^d)$ for $v\in \R$, and we show that they are measurable subspaces of $\S'(\T^d)$. Then, we reduce the question to the fact that every Besov space is a measurable subspace of some Sobolev space. 
This simplification step is useful because we have a wavelet characterization of Besov spaces as subspaces of Sobolev spaces. In order to apply the described approach, we use the following result. 
\begin{lemma}  \label{lemma:measurable}
Let $E$ be a topological vector space, $E'$ its topological dual and $\mathcal{B}_c(E')$ the cylindrical $\sigma$-field on $E'$, generated by the cylinders of the form 
\begin{equation}
	C_{\varphi_1, \cdots ,\varphi_N, B} 
	:= 
	\setb{ f \in E' }{ \parenth{ \innprod{f}{\varphi_1}, \cdots , \innprod{f}{\varphi_N} } \in B}.
\end{equation}
Here,  $N \in \N\backslash\{0\}$, $\varphi_1,\cdots,\varphi_N \in E$ and $B$ is a Borel subset of $\R^N$.
Then, for  every countable set $S$, every collection of finite sets $T_n$ ($n \in S$), every  $\varphi_n,\varphi_{n,m} \in E$ ($n\in S$ and $m\in T_n$), and every $\alpha, \beta>0$, we have
\begin{equation} \label{cylinder_1}
	\setb{f \in E' }{ \sum_{n\in S} \abs{ \innprod{f}{\varphi_n} }^\alpha <\infty} 
	\in 
	\mathcal{B}_c(E')
\end{equation}
and
\begin{equation} \label{cylinder_2}
	\setb{f \in E' }{ \sum_{n\in S} \left( \sum_{m\in T_n} |\langle f, \varphi_{n,m}\rangle|^\alpha \right)^\beta <\infty }
	\in 
	\mathcal{B}_c(E').	
\end{equation}
\end{lemma}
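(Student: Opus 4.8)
The plan is to deduce both displays from the single fact that for a fixed $\varphi \in E$, the evaluation map $f \mapsto \innprod{f}{\varphi}$ is measurable from $(E', \mathcal{B}_c(E'))$ to $(\R, \mathcal{B}(\R))$; this is immediate from the definition of $\mathcal{B}_c(E')$, since the preimage of a Borel set $B \subset \R$ under this map is exactly the cylinder $C_{\varphi, B}$. From here everything is an exercise in stability of $\sigma$-fields under countable operations, so no deep input is needed; the only thing to be careful about is that the index sets are countable and that partial sums stay measurable before we pass to the (possibly infinite) limit.

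First I would handle \eqref{cylinder_1}. For each finite subset $F \subseteq S$, the map $g_F : f \mapsto \sum_{n \in F} \abs{\innprod{f}{\varphi_n}}^\alpha$ is measurable, being a finite sum of measurable functions (compositions of the measurable evaluations with the continuous map $t \mapsto \abs{t}^\alpha$). Enumerating $S = \{n_1, n_2, \dots\}$ and writing $F_k = \{n_1,\dots,n_k\}$, the monotone pointwise limit $g(f) := \sup_k g_{F_k}(f) = \sum_{n\in S}\abs{\innprod{f}{\varphi_n}}^\alpha \in [0,+\infty]$ is measurable as a supremum of countably many measurable functions. The set in \eqref{cylinder_1} is then $g^{-1}([0,\infty)) = \bigcup_{M\in\N} g^{-1}([0,M])$, a countable union of elements of $\mathcal{B}_c(E')$, hence in $\mathcal{B}_c(E')$.

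For \eqref{cylinder_2} the argument is the same with one extra layer. Since each $T_n$ is finite, the inner function $h_n : f \mapsto \bigl(\sum_{m\in T_n}\abs{\innprod{f}{\varphi_{n,m}}}^\alpha\bigr)^\beta$ is measurable (finite sum, then composition with the continuous map $t \mapsto t^\beta$ on $[0,\infty)$). Now $S$ is countable, so applying the previous paragraph's reasoning to the countable family $\{h_n\}_{n\in S}$ — finite partial sums are measurable, the full sum is their monotone supremum hence measurable, and finiteness is the preimage of $\bigcup_M [0,M]$ — shows the set in \eqref{cylinder_2} lies in $\mathcal{B}_c(E')$.

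The only genuine subtlety, and the one place where the hypotheses are used essentially, is that the families involved are countable: $S$ is assumed countable and each $T_n$ finite, which is exactly what guarantees that the partial-sum functions are measurable and that the passage to the limiting (infinite) sum is a countable supremum rather than an uncountable one. Beyond this bookkeeping there is no real obstacle; the result is a routine consequence of the measurability of the coordinate evaluations together with the closure of $\mathcal{B}_c(E')$ under countable unions and of measurable functions under countable suprema and composition with continuous maps.
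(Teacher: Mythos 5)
Your proof is correct and follows essentially the same route as the paper: both arguments rest on the measurability of the coordinate evaluations $f \mapsto \innprod{f}{\varphi}$ (preimages of Borel sets being generating cylinders) together with stability of measurability under countable operations. The only difference is organizational: the paper factors the argument through the product space $\R^{\N}$ (the projection $\pi_{\boldsymbol{\varphi}}$ followed by the measurable functional $F_{\alpha,\beta}$) and obtains \eqref{cylinder_1} as the special case of \eqref{cylinder_2} with singleton $T_n$ and $\beta=1$, whereas you work directly on $E'$ with monotone limits of partial sums and prove \eqref{cylinder_1} first --- in effect supplying the justification the paper leaves implicit for the measurability of $F_{\alpha,\beta}$.
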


\begin{proof}
We prove \eqref{cylinder_2} and deduce \eqref{cylinder_1} by choosing $T_n$ to have cardinality $1$ with $\beta = 1$. 
We denote by $\R^{\N}$ the space of real sequences, with the product $\sigma$-field. 
The set of couples $(n,m)$ with $n\in S$ and $m\in T_n$ is denoted by $A$.
By definition of the cylindrical $\sigma$-field, for fixed $\boldsymbol{\varphi}= (\varphi_{n,m})_{(n,m)\in A}$, the projection 
\begin{equation}
	\pi_{\boldsymbol{\varphi}} (f) 
	= 
	\parenth{\innprod{f}{\varphi_{n,m}}}_{(n,m)\in A}
\end{equation}
is measurable from $E'$ to $\R^\N$. 
(Rigorously, this projection is from $E'$ to $\R^A$, but $A$ being countable, we admit this slight abuse of notation.)
Moreover, the function $F_{\alpha,\beta}$ from $\R^\N$ to $\R^+ \cup \{\infty\}$ that associates to a sequence $(a_{n,m})_{(n,m)\in A}$ the quantity $\sum_{n\in \N} \left( \sum_{m\in T_n} |a_{n,m}|^\alpha \right)^\beta$ is measurable. 
Finally,  since $\R^+$ is measurable in $\R^+ \cup \{\infty\}$,
\begin{equation}
	\setb{ f \in E' }{ \sum_{n\in \N} \parenth{ \sum_{m\in T_n} \abs{\innprod{f}{\varphi_{n,m}}}^\alpha }^\beta <\infty }
	= 
	\pi_{\boldsymbol{\varphi}}^{-1} \parenth{F_{\alpha,\beta}^{-1} (\R^+)}
\end{equation}
 is measurable in $E'$. 
\end{proof}

\noindent
\textit{Proof of Theorem \ref{theorem:measurable}.}

\noindent
\textbf{Step 1: $H_2^{v}(\T^d)$ is measurable.}

For a fixed $v\in \R$, we have
\begin{equation}
	H_2^{v}(\T^d) 
	= 
	\setb{ f \in \S'(\T^d)}{ 
	\sum_{\bm{n}\in \Z^d} 
	\parenth{1 + \abs{\bm{n}}}^{2v} 
	\abs{\innprod{ f}{\ue^{-2\pi \ui \langle \bm{n}, \cdot \rangle} }}^2 < \infty }.
\end{equation}
Thus, the conditions of Lemma \ref{lemma:measurable}
are satisfied,  with $E = \S(\T^d)$, $S = \Z^d$, $\alpha = 2$ and $\varphi_{\bm{n}} = \parenth{1+\abs{\bm{n}}}^{v} \ue^{-2\pi\ui  \langle \bm{n}, \cdot \rangle }$, and we obtain that $H_2^{v}(\T^d) \in \mathcal{B}_c(\S'(\T^d))$.

Moreover, we know that $\S(\T^d) = \bigcap_{k\in \mathbb{Z}} H_2^k(\T^d)$ is a countably Hilbert nuclear space \cite[Section 1.3]{Ito1984foundations}. Its topology is defined by the family of Hilbert norms associated with the Sobolev spaces $H_2^k(\T^d)$ with $k\in\Z$.
Actually, we can also include the Hilbert norm of $H_2^{-v}(\T^d)$ into the family, resulting in the same nuclear topology on $\S(\T^d)$.
Hence, we are in the context of Theorem $1.2.4$\footnote{
Ito showed that \eqref{sigma_fields} holds not only for $\S'(\T^d)$ but for every space $S'$ that is dual to a multi-Hilbert space $S$. Note that the cylindrical $\sigma$-field is called the Kolmogorov $\sigma$-field in \cite{Ito1984foundations}, so  it is denoted by $\mathcal{B}_K(S')$.} of \cite{Ito1984foundations}, and we know that
\begin{equation} \label{sigma_fields}
	\mathcal{B}_c(H_2^{v}(\T^d)) 
	= 
	\setb{ B \cap H_2^{v}(\T^d) }{ B\in \mathcal{B}_c(\S'(\T^d)) }.
\end{equation}
Coupled with the measurability of $H_2^{v}(\T^d)$, we deduce that
\begin{equation} \label{SoboS'}
	\mathcal{B}_c(H_2^{v}(\T^d))  
	\subset 
	\mathcal{B}_c(\S'(\T^d)) .
\end{equation}

\noindent
\textbf{Step 2: $B_{p,q}^\tau(\T^d)$ is measurable.}

Let us fix $0<p,q<\infty$ and $\tau \in \R$. 
Corollary \ref{theo:isomorphism} ensures that, for $\epsilon >0$, $u\in \N$
such that $u >\max(\tau, d(1/p-1/2)_+-\tau+\epsilon)$ and $\{\Psi_{G,m}^{j,\mathrm{per}}\}$  an orthonormal basis in $L_2(\mathbb{T}^d)$  with regularity $u$, 
there exists $v \in \R$ ($v =\tau-d(1/p-1/2)_+-\epsilon $) such that
\begin{align}
	B_{p,q}^{\tau}(\T^d) 
	&= 
	\setb{ f\in H_2^{v} (\T^d) }{  
	\sum_{j \in \N, G\in G^j} 
	\parenth{ 
	\sum_{m\in \mathbb{P}_j^d} 
	\abs{  \innprod{f}{\varphi_{(j,G),m}}  }^p }^{q/p} 	< \infty} \nonumber \\
	\varphi_{(j,G),m}
	&= 
	2^{j (\alpha - d/p + d/2) + Ld/2} \psi^{j,\mathrm{per}}_{G,m} .
\end{align}
Again, we satisfy the conditions of Lemma \ref{lemma:measurable},
with $E = H_2^{v}(\T^d)$, $S = \{(j,G), \ j\in \N, G\in G^j \}$, $T_n  = \mathbb{P}_j^d$ for $n=(j,G)$, $\alpha = p$, and $\beta = q/p$.

Thus, we know that $B_{p,q}^\tau (\T^d) \in \mathcal{B}_c(H_2^{v}(\T^d))$. Considering this together with \eqref{SoboS'}, we finally deduce \eqref{eq:besov_measurable}. 

Suitable modifications are made for the cases $p=\infty$ or $q = \infty$. 
\hfill \qed \\

Let $s$ be a generalized periodic process with probability law $\mathscr{P}_s$ that satisfies $\mathscr{P}_s(B^\tau_{p,q}(\T^d)) = 1$. We define the set 
$\Omega_0 := \setb{\omega \in \Omega}{ s(\omega) \in B^\tau_{p,q}(\T^d) }$,
 and we say that 
 $s$ admits a version localized in the space $B^\tau_{p,q}(\T^d)$. It is defined by $	\tilde{s} (\omega) = s(\omega)$,  if $\omega \in \Omega_0$, and $\tilde{s} (\omega) = 0$ otherwise.
We then identify $s$ with $\tilde{s}$. As such, it becomes possible to consider random variables of the form $\langle s, f \rangle$ with $f \in (B^\tau_{p,q}(\T^d))'$ (see Theorem \ref{theo:dual}), and possibly $f \notin \S(\T^d)$. This fact will be used in the next section.

\section{On the Regularity of L\'evy Noises} \label{noise_regularity}

\subsection{Regularity of General L\'evy Noises}

As mentioned previously, we can only consider \emph{a priori} test functions $\varphi \in \S(\T^d)$ as
windows for a given periodic generalized  process $s$. 
However, in order to use Corollary \ref{theo:isomorphism} and measure the Besov regularity of $s$, we first derive preliminary results on Sobolev regularity. 
Here, we restrict the domain of definition of every periodic white noise so that we can consider test functions (wavelets) in $H_2^{d/2+\epsilon}(\T^d)$, and not necessarily in $\S(\T^d)$.

\begin{proposition} \label{lem:smooth}
If $w$ is a periodic white noise, and $\epsilon >0$, then 
\begin{equation}
	\mathscr{P} \parenth{w\in H_2^{-d/2-\epsilon}(\T^d)} 
	= 
	1.
\end{equation}
\end{proposition}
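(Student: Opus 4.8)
The claim is that a periodic Lévy white noise lies almost surely in $H_2^{-d/2-\epsilon}(\T^d)$. The natural strategy is to work directly with the Fourier coefficients $\hat{w}(\bm{n}) = \innprod{w}{\ue^{-2\pi\ui\innprod{\bm{n}}{\cdot}}}$ for $\bm{n}\in\Z^d$ and show that the series defining $\norm{w}_{H_2^{-d/2-\epsilon}}^2 = \sum_{\bm{n}} (1+\abs{\bm{n}})^{-d-2\epsilon}\abs{\hat{w}(\bm{n})}^2$ converges almost surely. By Tonelli/Fubini and the nonnegativity of the summands, it suffices to show that the expectation of this series is finite, i.e. $\sum_{\bm{n}}(1+\abs{\bm{n}})^{-d-2\epsilon}\,\expect{\abs{\hat{w}(\bm{n})}^2}<\infty$; since $\sum_{\bm{n}\in\Z^d}(1+\abs{\bm{n}})^{-d-2\epsilon}<\infty$, this reduces to showing that $\expect{\abs{\hat{w}(\bm{n})}^2}$ is bounded uniformly in $\bm{n}$ — in fact, by stationarity of the white noise, it should be \emph{independent} of $\bm{n}$.

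First I would make sense of $\hat{w}(\bm{n})$ as a genuine (complex) random variable. Since $\ue^{-2\pi\ui\innprod{\bm{n}}{\cdot}}\in\S(\T^d)$, the pairing is well-defined and measurable; writing the complex exponential in terms of its real and imaginary parts, $\hat{w}(\bm{n}) = \innprod{w}{\cos(2\pi\innprod{\bm{n}}{\cdot})} - \ui\innprod{w}{\sin(2\pi\innprod{\bm{n}}{\cdot})}$, each component is a real random variable whose law is read off from the characteristic functional \eqref{eq:CF}. Next I would compute the needed second moment. The key computation is: for a real test function $\varphi\in\S(\T^d)$, the random variable $\innprod{w}{\varphi}$ has characteristic function $\xi\mapsto\CF_w(\xi\varphi)=\exp\big(\int_{\T^d}f(\xi\varphi(\rbf))\drm\rbf\big)$, so $\expect{\abs{\innprod{w}{\varphi}}^2}$ (when it exists) equals $-\frac{\drm^2}{\drm\xi^2}\big|_{\xi=0}$ of this, which — using the Lévy–Khintchine form \eqref{eq:LK} and differentiating under the integral — yields $\big(\sigma^2+\int_{\Rstar}t^2 V(\drm t)\big)\int_{\T^d}\varphi(\rbf)^2\drm\rbf$ plus a term involving $\expect{\innprod{w}{\varphi}}^2$ proportional to $\big(\int\varphi\big)^2$. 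Applying this with $\varphi=\cos(2\pi\innprod{\bm{n}}{\cdot})$ and $\varphi=\sin(2\pi\innprod{\bm{n}}{\cdot})$ and adding, one gets $\expect{\abs{\hat{w}(\bm{n})}^2} = \big(\sigma^2+\int_{\Rstar}t^2V(\drm t)\big)\cdot\int_{\T^d}\big(\cos^2+\sin^2\big) = \sigma^2+\int_{\Rstar}t^2V(\drm t)$ for $\bm{n}\neq 0$ (the mean term vanishes since $\int_{\T^d}\ue^{-2\pi\ui\innprod{\bm{n}}{\cdot}}=0$), a constant independent of $\bm{n}$.

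The main obstacle is that a general Lévy measure only satisfies $\int_{\Rstar}\min(1,t^2)V(\drm t)<\infty$, so $\int_{\Rstar}t^2V(\drm t)$ may be \emph{infinite} — the white noise need not have a finite second moment, and then the differentiation-under-the-integral argument above is not available. This needs to be handled by a truncation/decomposition argument: split the Lévy measure as $V = V\mathbf{1}_{\abs{t}\leq 1} + V\mathbf{1}_{\abs{t}>1}$, which corresponds to writing $w = w_1 + w_2$ as a sum of two independent white noises where $w_1$ has Gaussian part plus small-jump part (finite second moment, $\int_{\abs{t}\leq1}t^2V(\drm t)<\infty$, so the argument above applies) and $w_2$ is a compound-Poisson-type noise coming from the large jumps (finite Lévy measure $\lambda:=V(\abs{t}>1)<\infty$). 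For $w_1$ the argument above gives $w_1\in H_2^{-d/2-\epsilon}$ a.s.; for $w_2$ one argues separately — e.g. $w_2$ has almost surely finitely many "atoms" on the torus (it is a finite sum of weighted Dirac masses, by the compound Poisson structure), and since each Dirac $\delta\in H_2^{-d/2-\epsilon}(\T^d)$ (by Proposition \ref{proposition_dirac} with $p=q=2$, $\tau=-d/2-\epsilon$, noting $B_{2,2}^{-d/2-\epsilon}=H_2^{-d/2-\epsilon}$ and $-d/2-\epsilon<d(1/2-1) = -d/2$), a finite sum of shifted Diracs is in $H_2^{-d/2-\epsilon}$ as well. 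Adding the two contributions gives $w=w_1+w_2\in H_2^{-d/2-\epsilon}(\T^d)$ almost surely. A clean alternative that avoids the case split is to use a first-moment (rather than second-moment) criterion after truncating: apply Fubini to $\expect{\sum_{\bm n}(1+\abs{\bm n})^{-d-2\epsilon}\min(1,\abs{\hat w(\bm n)}^2)}$, or more simply show $\sum_{\bm n}(1+\abs{\bm n})^{-d/2-\epsilon}\expect{\min(1,\abs{\hat w(\bm n)})}<\infty$ using $\expect{\ue^{\ui\xi\innprod{w}{\varphi}}}$ bounds near $\xi=0$, which only requires $\int\min(1,t^2)V<\infty$; I would expect the paper to take one of these routes, and I would flag the finiteness-of-second-moment issue as the crux to address carefully.
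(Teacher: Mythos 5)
Your main route is correct, but it is genuinely different from the paper's. The paper never touches moments: it invokes an abstract support theorem (Theorem A.2 of Hida and Si Si, a Minlos--Sazonov type result), for which it suffices to check that the embedding $H_2^{d/2+\epsilon}(\T^d)\hookrightarrow L_2(\T^d)$ is Hilbert--Schmidt and that $\CF_w$ is continuous at the origin with respect to $\norm{\cdot}_{L_2(\T^d)}$; the continuity is verified straight from the L\'evy--Khintchine form \eqref{eq:LK} by splitting the L\'evy measure at the levels $1$ and $M$ and using the elementary bounds $\abs{\ue^{\ui x}-1-\ui x}\le x^2$ and $\abs{\ue^{\ui x}-1}\le \min(2,\abs{x})$. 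That treats every L\'evy exponent in one stroke, with no moment hypotheses and no structural facts about the noise. Your argument instead uses a L\'evy--It\^o-type splitting $f=f_1+f_2$, hence $w$ equal in law to $w_1+w_2$ with $w_1$ the drift/Gaussian/compensated-small-jump part (finite variance, handled by the Tonelli plus second-moment computation on Fourier coefficients) and $w_2$ the large-jump compound Poisson part (handled via its a.s.\ representation as a finite sum of Diracs, Theorem 1 of \cite{TaftiPoisson}, the same fact the paper uses later for Proposition \ref{prop:Poisson}). This is a valid, more elementary route, provided you make the glue explicit: each $f_i$ is itself a L\'evy exponent, so $\CF_{w_1}\CF_{w_2}=\CF_w$ and the law of $w$ coincides with that of the independent sum; the set $H_2^{-d/2-\epsilon}(\T^d)$ lies in $\mathcal{B}_c(\S'(\T^d))$ (Step 1 of the proof of Theorem \ref{theorem:measurable}) and is a vector space, so almost-sure membership transfers through equality in law; and the differentiation of $\xi\mapsto\exp\parenth{\int_{\T^d}f_1(\xi\varphi(\rbf))\drm\rbf}$ under the integral is licensed by $\int_{0<\abs{t}\le1}t^2V(\drm t)<\infty$. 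What each approach buys: the paper's proof is uniform and extends verbatim to any characteristic functional that is $L_2$-continuous, at the price of the abstract nuclear-space machinery; yours is hands-on and makes the source of the $-d/2$ exponent transparent, at the price of the decomposition, the external impulsive-structure result, and the law-transfer bookkeeping. One caveat: your ``clean alternative'' at the end does not work as stated --- the weights $(1+\abs{\bm{n}})^{-d/2-\epsilon}$ are not summable over $\Z^d$ for small $\epsilon$, and even the correct truncation $\expect{\sum_{\bm{n}}\min\parenth{1,(1+\abs{\bm{n}})^{-d-2\epsilon}\abs{\innprod{w}{\ue^{-2\pi\ui\langle\bm{n},\cdot\rangle}}}^2}}$ can diverge for heavy-tailed noises (for S$\alpha$S data one has $\expect{\min(1,c\abs{X}^2)}\asymp c^{\alpha/2}$, which is not summable against these weights when $\alpha$ is small), so the decomposition is not optional in your scheme; it is exactly what rescues the second-moment idea.
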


\begin{proof} 
According to  
Theorem A.2\footnote{
Theorem A.2 of \cite{Hida2004} ensures that, if $E$ is a countably Hilbert space included in $L_2(\R^d)$, then we can deduce bounds on the support of a probability measure over $E'$ from the continuity of the characteristic functional. Here, we use a version of this result where $E = \S(\T^d) \subset L_2(\T^d)$, the adaptation to the periodic case being straightforward. 
} of
\cite{Hida2004},
it is sufficient to prove the two following results:
\begin{enumerate}
	\item the inclusion $I : H_2^{d/2+\epsilon}(\T^d) \rightarrow L_2(\T^d)$ is a Hilbert-Schmidt operator, and
	\item the characteristic functional $\CF_w$ is continuous with respect to the norm $\lVert\cdot \rVert_{L_2(\T^d)}$.
\end{enumerate}

For the first point, we  simply remark that the family 
\begin{equation}
\set{ (1+|\bm{n}|)^{-d/2-\epsilon} \ue^{2\pi \ui \langle \bm{n}, \cdot \rangle}}_{\bm{n}\in \Z^d}
\end{equation}
is an orthonormal basis of $H_2^{d/2+\epsilon} (\T^d).$ Moreover, we have
\begin{equation}
	\sum_{\bm{n}\in \Z^d} \lVert (1+|\bm{n}|)^{-d/2-\epsilon} \ue^{2\pi \ui \langle \bm{n}, \cdot \rangle} \rVert^2_{L_2(\T^d)}
	= 
	\sum_{\bm{n}\in \Z^d} (1+|\bm{n}|)^{-d-2\epsilon}
	<
	\infty.
\end{equation}
Hence, $I$ is Hilbert-Schmidt.

Let us now show that $\CF_w$ is continuous with respect to $\lVert \cdot
\rVert_{L_2(\T^d)}$. Since $\CF_w$ is positive definite, it is sufficient to demonstrate continuity at the origin, see for instance \cite{horn1975}.  The characteristic functional of $w$ has the general form
of \eqref{eq:CF} with $f$ a L\'evy exponent, so it has a L\'evy-Khintchine representation \eqref{eq:LK}.
Let   $\set{\varphi_n}_{n\in \N}$ be  a sequence of functions in
$\S(\T^d)$ with $\lVert  \varphi_n \rVert_{L_2(\T^d)} \rightarrow 0$. If we
develop the L\'evy expansion of $f$ in \eqref{eq:CF}, then
\begin{align} \label{control_CF}
	\abs{\log \CF_w(\varphi_n)} 
	& \leq
	\abs{\mu} \norm{ \varphi_n}_{L_1(\T^d)} 
	+ \frac{\sigma^2}{2}  \norm{ \varphi_n}_{L_2(\T^d)}   \nonumber \\
	& \quad + 
	\int_{\T^d} 
	\int_{0 < \abs{t}\leq 1} 
	\abs{ \ue^{\ui \varphi_n(\rbf)t} -1 - \ui \varphi_n(\rbf) t } 
	V(\drm t)
	\lambda(\drm \rbf)   \nonumber \\
	&  \quad + 
	\int_{\T^d} 
	\int_{\abs{t}> 1} 
	\abs{\ue^{\ui \varphi_n(\rbf)t} -1}
	V(\drm t)
	\lambda(\drm \rbf) 		
\end{align}
where $\lambda$ is the Lebesgue measure on $\T^d$, normalized such that $\lambda(\T^d) = 1$.
Let $\epsilon >0$, and $M\geq1$ such that $\int_{\abs{t}> M } V(\drm t) \leq \epsilon$.
Let $n$ be large enough so that $\lVert \varphi_n \rVert_{L_2(\T^d)} \leq \epsilon/M $. 
Then, $\norm{ \varphi_n}_{L_1(\T^d)} \leq \sqrt{\lambda(\T^d)} \norm{ \varphi_n }_{L_2(\T^d)} =  \norm{ \varphi_n }_{L_2(\T^d)} \leq \epsilon / M$,

We have to control the different terms of \eqref{control_CF}. 

First, we remark that, since $M \geq 1$,
\begin{equation}
\abs{\mu} \norm{ \varphi_n}_{L_1(\T^d)} 
	+ \frac{\sigma^2}{2}  \norm{ \varphi_n}_{L_2(\T^d)} \leq (\mu + \sigma^2 /2) \epsilon.
\end{equation}

For the penultimate term, using the fact that 
$ \abs{ \ue^{\ui x} - 1 - \ui x }\leq x^2$, we have
\begin{align}
	\int_{\T^d} 
	\int_{0 < \abs{t}\leq 1} 
	\abs{\ue^{\ui \varphi_n(\rbf)t} -1 - \ui \varphi_n(\rbf) t } 
	V(\drm t)
	\lambda(\drm \rbf) 
	&\leq   
	\norm{ \varphi_n }_{L_2(\T^d)}
	\int_{0 < \abs{t}\leq 1}
	t^2 
	V(\drm t)  \nonumber \\
	& \leq 	\left( \int_{0 < \abs{t}\leq 1}
	t^2 
	V(\drm t) \right) \epsilon
\end{align}
For the last term of \eqref{control_CF}, we use $\abs{\ue^{\ui x} -1} \leq \abs{x}$
and $\abs{\ue^{\ui x} -1} \leq 2$, which yields
\begin{align}
	\int_{\T^d}
	\int_{\abs{t}> 1}
	\abs{ \ue^{\ui \varphi_n(\rbf)t} -1}
	V(\drm t)
	\lambda(\drm \rbf)
	& = 
	\int_{\T^d}
	\int_{\abs{t}> M}
	\abs{\ue^{\ui\varphi_n(\rbf)t} -1}
	V(\drm t)
	\lambda(\drm \rbf)  \nonumber\\
	& \quad +
	\int_{\T^d}
	\int_{1<\abs{t}\leq  M}
	\abs{\ue^{\ui \varphi_n(\rbf)t} -1}
	V(\drm t)
	\lambda(\drm \rbf)  \nonumber \\
	& \leq 
	2\parenth{ \int_{\T^d} \lambda(\drm \rbf) }
	\parenth{
	\int_{\abs{t}> M}
	V(\drm t) } \nonumber \\
	& \quad +
	\parenth{
	\int_{1< \abs{t} \leq M}
	\abs{t}
	V(\drm t) }
	\norm{ \varphi_n }_{L_1(\T^d)}  \nonumber \\
	& \leq 
	2\epsilon + M
	\parenth{
	\int_{1< \abs{t} \leq M}
	V(\drm t) }
	\frac{\epsilon}{M} \\
	& \leq 
	\left( 2 + \int_{1 < \abs{t}} V(\drm t) \right) \epsilon.
\end{align}
Finally, for a given $\epsilon$ and $n$ large enough, we have 
\begin{equation}
	\abs{\log \CF_w(\varphi_n)} 
	\leq 
	\kappa \epsilon,
\end{equation}
with 
\begin{equation}
	\kappa 
	= 
	\abs{\mu} 
	+ \frac{\sigma^2}{2} 
	+ \int_{\Rstar} 
	\min(1,t^2) 
	V(\drm t) 
	+ 2 
	< 
	\infty.
\end{equation} 
 Thus, we can conclude that $\CF_w$ is continuous with respect to $\norm{\cdot}_{L_2(\T^d)}$.
\end{proof}

\begin{corollary} \label{general_besov}
Let $w$ be a L\'evy white noise. Then, for every $0<p,q \leq \infty$ and $\tau\in \R$ such that
\begin{equation}  \label{eq:General1}
	\tau
	<
	d\parenth{  \frac{1}{\max\parenth{p,2}} -1  },
\end{equation}
we have
\begin{equation} \label{eq:General2}
	w \in B_{p,q}^{\tau}(\mathbb{T}^d)  \text{ a.s.}
\end{equation}
\end{corollary}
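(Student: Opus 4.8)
The strategy is to reduce the Besov statement to a Sobolev statement via the embeddings of Proposition~\ref{prop:embed}, and then invoke Proposition~\ref{lem:smooth}. The key observation is that the condition $\tau < d(1/\max(p,2)-1)$ is engineered precisely so that $B_{p,q}^\tau(\T^d)$ contains (or is "close to") a Sobolev space of the form $H_2^{-d/2-\epsilon}(\T^d)$, which by Proposition~\ref{lem:smooth} almost surely contains $w$.

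First I would split into the two cases $p \le 2$ and $p > 2$. When $p \le 2$, the relevant embedding is the last line of \eqref{eq:embeddings}, namely $H_2^{v + d(1/2-1/p)_+}(\T^d) \subset B_{p,q}^{v-\epsilon'}(\T^d)$; when $p > 2$ the term $(1/2-1/p)_+$ vanishes and the embedding $H_2^v(\T^d) \subset B_{p,q}^{v-\epsilon'}(\T^d)$ (again the last line, now with the $+$-part equal to zero) applies. In either case, I want to choose $v$ so that, on one hand, $v$ is small enough that $H_2^v(\T^d)$ contains $w$ almost surely — by Proposition~\ref{lem:smooth} this holds as soon as $v < -d/2$, i.e.\ $v = -d/2-\epsilon$ for some $\epsilon>0$ — and, on the other hand, the resulting Besov exponent $v - d(1/2-1/p)_+ - \epsilon'$ (after accounting for the $\epsilon$-loss in the embedding, with $\epsilon'$ arbitrarily small) is at least the given $\tau$. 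Concretely: given $\tau < d(1/\max(p,2)-1)$, set $\tau' := \tau$; the target Sobolev smoothness needed is $v = \tau + d(1/2-1/p)_+ + \epsilon'$ (for $p\le 2$) or $v = \tau + \epsilon'$ (for $p > 2$). A short computation shows that $\tau < d(1/\max(p,2)-1)$ is exactly equivalent to $v < -d/2$ for a suitable infinitesimal choice, so one can pick $\epsilon, \epsilon' > 0$ small enough that $v < -d/2$ still holds. Then Proposition~\ref{lem:smooth} gives $w \in H_2^{v}(\T^d)$ a.s., and the embedding gives $H_2^{v}(\T^d) \subset B_{p,q}^{\tau}(\T^d)$, so $w \in B_{p,q}^\tau(\T^d)$ a.s.

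To be careful about measurability: the event $\{w \in B_{p,q}^\tau(\T^d)\}$ is a genuine event by Theorem~\ref{theorem:measurable}, and the event $\{w \in H_2^v(\T^d)\}$ likewise by Step~1 of its proof, so the inclusion of events $\{w \in H_2^v\} \subset \{w \in B_{p,q}^\tau\}$ transfers the probability-one statement as desired. One subtlety worth a sentence: when $p$ or $q$ equals $\infty$, the embeddings in Proposition~\ref{prop:embed} are stated for all $0 < p,q \le \infty$, so no special argument is needed, though one may note the usual modification of the Besov (quasi-)norm.

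The main obstacle is not conceptual but bookkeeping: one must verify that the arithmetic relating $\tau < d(1/\max(p,2)-1)$ to the feasibility of choosing $v < -d/2$ with $v = \tau + d(1/2-1/p)_+ + \epsilon'$ actually works, i.e.\ that $\tau + d(1/2-1/p)_+ < -d/2$ is equivalent to (or implied by, with room to spare for $\epsilon'$) the hypothesis. For $p \le 2$: $\tau + d/2 - d/p < -d/2 \iff \tau < d/p - d = d(1/p - 1)$, and $\max(p,2) = 2$ is wrong here — rather $\max(p,2)=2$ only when $p\le 2$, so one must double-check the statement's use of $\max(p,2)$; in fact for $p \le 2$ the bound reads $\tau < d(1/2 - 1)= -d/2$, which is \emph{stronger} than $d(1/p-1)$, so there is ample slack. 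For $p > 2$: $\tau + 0 < -d/2 = d(1/2-1)$ matches $d(1/\max(p,2)-1)$ exactly with $\max(p,2)=p$?? — no, for $p>2$, $\max(p,2)=p$, giving $\tau < d(1/p -1)$, and we need $\tau < -d/2$; since $p>2 \Rightarrow 1/p < 1/2 \Rightarrow d(1/p-1) < -d/2$, again the hypothesis is the stronger one and leaves room for the $\epsilon'$-loss. So in both regimes the hypothesis is strictly stronger than what the embedding chain literally requires, which is exactly why an arbitrarily small $\epsilon' > 0$ can be absorbed. I would present this slack explicitly as the crux of the argument.
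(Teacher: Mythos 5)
Your overall route is exactly the paper's: combine Proposition \ref{lem:smooth} ($w\in H_2^{-d/2-\epsilon}(\T^d)$ a.s.) with the fourth embedding of Proposition \ref{prop:embed} to push the Sobolev localization into the Besov scale. However, the bookkeeping that you yourself identify as ``the crux'' is carried out with the two regimes swapped, and since the whole content of the corollary is precisely this bookkeeping, the error matters. The positive part $d\left(1/2-1/p\right)_+$ vanishes when $p\le 2$ (then $1/p\ge 1/2$), not when $p>2$. Consequently, in the regime $p>2$ your choice $v=\tau+\epsilon'$ is too small: you invoke the embedding $H_2^{\tau+\epsilon'}(\T^d)\subset B_{p,q}^{\tau}(\T^d)$, which is false for $p>2$ (increasing integrability from $2$ to $p$ costs $d\left(1/2-1/p\right)$ in smoothness; this is exactly what the $+$-part in \eqref{eq:embeddings} encodes). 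The correct choice there is $v=\tau+d\left(1/2-1/p\right)+\epsilon'$, and then $v<-d/2$ is equivalent, up to the arbitrarily small $\epsilon'$, to the hypothesis $\tau<d\left(1/p-1\right)=d\left(1/\max(p,2)-1\right)$.

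In the regime $p\le 2$ your untruncated arithmetic ($v=\tau+d/2-d/p+\epsilon'$) likewise rests on an embedding that does not hold (on the torus, lowering integrability below $2$ does not buy smoothness), and it leads you to the claim that the hypothesis has ``ample slack'' over what is needed. It does not: with the positive part correctly set to zero, the requirement is exactly $\tau+\epsilon'<-d/2$, i.e.\ the hypothesis $\tau<-d/2$, with the strict inequality providing only the room needed to absorb the $\epsilon$-losses; the same is true in the regime $p>2$, where the requirement is exactly $\tau<d(1/p-1)$. So the hypothesis \eqref{eq:General1} is the sharp threshold delivered by this embedding chain in both regimes, not a strictly stronger assumption. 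Once you un-swap the cases and choose $v=\tau+d\left(1/2-1/p\right)_++\epsilon'$ (with the positive part applied correctly), your argument collapses to the paper's own two-line proof; the measurability remark via Theorem \ref{theorem:measurable} is fine as you state it.
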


\begin{proof}
From the fourth relation of  \eqref{eq:embeddings}, we know that for all $0<p,q\leq \infty$ and $\tau>0$, we have the inclusion 
$H_2^{-d/2-\epsilon}(\T^d) \subset B_{p,q}^{-d/2-\epsilon - d(1/2-1/p)_+ - \tau}(\T^d)$. 
We already know from Proposition \ref{lem:smooth} that $w$ is a.s. in $H_2^{-d/2-\epsilon}(\T^d)$ for any $\epsilon >0$, from which we deduce \eqref{eq:General2}.
\end{proof}

Corollary \ref{general_besov} gives a general result on the Besov localization of L\'evy white noises. Moreover, we shall use Proposition \ref{lem:smooth} to obtain stronger results on the regularity of stable noises in Section \ref{subsec:stable}.

\subsection{Regularity of Compound Poisson Noises}

Compound Poisson noises are a special case of L\'evy noises. 
For all $\varphi \in \S(\T^d)$, the characteristic functional of a compound Poisson noise $w$ has the  form
\begin{equation}
	\CF_w(\varphi) = \exp \left( c  \int_{\T^d} \int_{\Rstar} (\ue^{\ui \xi t \varphi(\bm{r}) } - 1)  P(\mathrm{d}t)  \lambda(\mathrm{d}\bm{r}) \right), 
\end{equation}
where $c >0$ and $P$ is a probability measure on $\Rstar$
\cite{TaftiPoisson}.
Note that in the referenced work, compound Poisson noises are defined over $\R^d$. 
However, this definition is applicable to $\T^d$ by restriction and periodization.

A compound Poisson noise on the $d$-dimensional torus is a.s. a finite sum of Dirac delta functions with random locations and sizes. This makes the question of its Besov regularity especially simple.
Indeed, from Theorem 1 of \cite{TaftiPoisson}, we have the following equality in law:
\begin{equation}
w(\bm{r}) = \sum_{n=1}^{N} a_n \delta (\bm{r} - \bm{r}_n).
\end{equation}
In this formula, $N$ is a Poisson random variable with parameter $c$, the vector
$(a_n)\in \R^N$ is i.i.d. with law $P$, and the vector of random Dirac locations $(\bm{r}_n)\in(\R^d)^N $ satisfies that, for every  measurable $A\subset \T^d$,  
$\#\setb{n}{\bm{r}_n \in A}$ is a Poisson random variable of parameter $c\lambda(A)$. 
Note, moreover, that $(\bm{r}_n)$ and $(a_n)$ are independent. 

\begin{proposition} \label{prop:Poisson}
Let $w$ be a compound Poisson noise. Then, 
for every $\tau\in \R$ and $0<p,q \leq \infty$ such that
$ \tau < d \left( 1/p - 1 \right) $,
we have $w \in B_{p,q}^{\tau}(\mathbb{T}^d)$ almost surely.
\end{proposition}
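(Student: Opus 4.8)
The plan is to use the explicit representation of a compound Poisson noise as an almost surely finite sum of weighted Dirac deltas, $w = \sum_{n=1}^N a_n \delta(\cdot - \bm{r}_n)$, and reduce the Besov membership of $w$ to that of a single Dirac mass. First I would condition on the event $\{N = k\}$, which has positive probability for each $k \in \N_0$; on this event $w$ is a finite linear combination of $k$ translated Dirac distributions, with coefficients $a_n$ that are almost surely finite (being i.i.d.\ with law $P$ on $\Rstar$) and locations $\bm{r}_n$ that are almost surely well-defined points of $\T^d$. Since Besov spaces $B_{p,q}^\tau(\T^d)$ are (quasi-)Banach spaces and, in particular, translation-invariant vector spaces, it suffices to show that each summand $a_n \delta(\cdot - \bm{r}_n)$ lies in $B_{p,q}^\tau(\T^d)$; a finite sum of elements of a vector space stays in that space.

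The key input is Proposition \ref{proposition_dirac}, which states that $\delta \in B_{p,q}^\tau(\T^d)$ whenever $\tau < d(1/p - 1)$ and $0 < p,q \leq \infty$. This is exactly the hypothesis of the present proposition, so $\delta$ itself is in the target space. It then remains to observe two elementary facts: (i) the Besov quasi-norm is homogeneous under scalar multiplication, so $a_n \delta \in B_{p,q}^\tau(\T^d)$ for any scalar $a_n \in \R$; and (ii) the periodic Besov spaces are invariant under translation by any $\bm{r}_n \in \T^d$, with the quasi-norm unchanged (this follows, e.g., from the Fourier-analytic or wavelet characterization, translation being an isometry on $L_2(\T^d)$ that commutes appropriately with the Littlewood--Paley decomposition). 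Combining (i) and (ii) with Proposition \ref{proposition_dirac} gives $a_n \delta(\cdot - \bm{r}_n) \in B_{p,q}^\tau(\T^d)$ for every $n$, hence $w \in B_{p,q}^\tau(\T^d)$ on $\{N = k\}$.

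To finish, I would note that the event $\{w \in B_{p,q}^\tau(\T^d)\}$ is measurable in $\mathcal{B}_c(\S'(\T^d))$ by Theorem \ref{theorem:measurable}, so the statement ``$w \in B_{p,q}^\tau(\T^d)$ almost surely'' is well-posed. Then, writing $\Omega = \bigsqcup_{k \geq 0} \{N = k\}$ up to a null set, and having shown that $w \in B_{p,q}^\tau(\T^d)$ holds with probability one on each $\{N = k\}$ (the only exceptional events being the null sets where some $a_n$ fails to be finite or the location structure degenerates, which have probability zero), a countable union of null sets is null, so $w \in B_{p,q}^\tau(\T^d)$ almost surely on all of $\Omega$.

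I do not expect a serious obstacle here: the proof is essentially a one-line consequence of the Dirac representation of compound Poisson noise (Theorem 1 of \cite{TaftiPoisson}) together with Proposition \ref{proposition_dirac}. The only mild point requiring care is the translation-invariance of periodic Besov spaces and the fact that a finite sum of Diracs at distinct points stays in the space with no loss in the smoothness exponent — but since the exponent threshold $d(1/p-1)$ is the same for one Dirac and for finitely many, and the space is a vector space, there is genuinely nothing to lose. The comparison with the general bound \eqref{eq:General1} (which gives only $\tau < d(1/\max(p,2) - 1)$) shows the gain: for $p > 2$ the compound Poisson noise is strictly more regular in the Besov scale than a general L\'evy noise, reflecting the fact that its singularities are isolated point masses rather than genuinely distributed roughness.
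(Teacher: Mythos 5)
Your proposal is correct and follows essentially the same route as the paper: represent the compound Poisson noise as an almost surely finite sum of weighted, shifted Diracs and invoke Proposition \ref{proposition_dirac}; the paper's proof is just a two-line version of this, leaving the translation-invariance, scalar-homogeneity, and conditioning-on-$\{N=k\}$ details (which you spell out correctly) implicit.
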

\begin{proof}
The Besov smoothness of a single Dirac was stated in Proposition \ref{proposition_dirac}. 
Since the process $w$ is almost surely a finite sum of Diracs, it has the same regularity.  
\end{proof}

\subsection{Regularity of S$\alpha$S Noises} \label{subsec:stable}

For a given $\alpha\in (0,2]$, a \emph{symmetric-$\alpha$-stable} (S$\alpha$S)  white noise $w_{\alpha}$ is a generalized stochastic process with characteristic functional
\begin{equation}
	\CF_{w_\alpha} (\varphi) = \exp\left( -\gamma^\alpha \norm{\varphi}_{L_{\alpha}(\T^d)}^\alpha\right),
\end{equation}	
where $\gamma >0$ is the shape parameter. For $\alpha = 2$, $w_2$ is the Gaussian  white  noise.
Details on stable laws can be found in \cite{Taqqu1994}. 

\begin{theorem} \label{theo:regu_stable}
Given $\alpha \in (0,2]$, let $w_\alpha$ be a S$\alpha$S white noise. 
\begin{itemize}
\item If $\alpha = 2$, then for every $0<p,q \leq \infty$ and every $\tau < -d/2$, we have 
\begin{equation} \label{eq:Gaussian}
	w_{2} \in B_{p,q}^{\tau}(\mathbb{T}^d) \quad  a.s. ;
\end{equation}
\item if $\alpha <2$, then for every $0<p,q \leq \infty$ and $\tau\in \R$ such that
\begin{equation}  \label{eq:stable}
	\tau
	<
	d\parenth{  \frac{1}{\max\parenth{p,\alpha}} -1  },
\end{equation}
we  have
\begin{equation}\label{eq:stable2}
	w_{\alpha} \in B_{p,q}^{\tau}(\mathbb{T}^d) \quad  a.s.
\end{equation}
\end{itemize}
\end{theorem}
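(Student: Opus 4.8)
The plan is to reduce the claim, via Proposition~\ref{lem:smooth} and Corollary~\ref{theo:isomorphism_2}, to an almost-sure statement about the wavelet coefficients of $w_\alpha$, and then to control those coefficients scale by scale, exploiting the stable (heavy-tailed) structure of the noise.

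Since a S$\alpha$S white noise is in particular a L\'evy white noise, Proposition~\ref{lem:smooth} gives $w_\alpha\in H_2^{-d/2-\epsilon}(\T^d)$ almost surely for every $\epsilon>0$. Fix such an $\epsilon$ and an orthonormal wavelet basis $\{\Psi_{G,m}^{j,\mathrm{per}}\}$ of $L_2(\T^d)$ of regularity $u$ large enough that the hypotheses of Corollary~\ref{theo:isomorphism_2} hold; in particular $u>d/2+\epsilon$, so $\Psi_{G,m}^{j,\mathrm{per}}\in H_2^{d/2+\epsilon}(\T^d)=\bigl(H_2^{-d/2-\epsilon}(\T^d)\bigr)'$ and the pairings $\lambda_m^{j,G}:=\innprod{w_\alpha}{2^{(j+L)d/2}\Psi_{G,m}^{j,\mathrm{per}}}$ are well defined for the localized version of $w_\alpha$ (cf.\ the end of Section~\ref{measurable}). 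By that corollary, it suffices to show that $\lambda=\{\lambda_m^{j,G}\}\in b_{p,q}^{\tau,\mathrm{per}}$ almost surely. To identify the law of the coefficients, note first that, by density of $\S(\T^d)$ in $H_2^{d/2+\epsilon}(\T^d)$, continuity of $\CF_{w_\alpha}$, and the inclusions $H_2^{d/2+\epsilon}(\T^d)\subset L_2(\T^d)\subset L_\alpha(\T^d)$, each $\innprod{w_\alpha}{\psi}$ is S$\alpha$S with scale $\gamma\norm{\psi}_{L_\alpha(\T^d)}$ (centered Gaussian with variance $2\gamma^2\norm{\psi}_{L_2(\T^d)}^2$ when $\alpha=2$). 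The self-similar structure $\Psi_{G,m}^{j,\mathrm{per}}=2^{(j+L)d/2}\psi_G(2^{j+L}\cdot-m)$ gives $\norm{2^{(j+L)d/2}\Psi_{G,m}^{j,\mathrm{per}}}_{L_\alpha(\T^d)}\asymp 2^{(j+L)d(1-1/\alpha)}$ uniformly over the finitely many genders $G$, so $\lambda_m^{j,G}$ is S$\alpha$S with scale at most $\Gamma\,2^{(j+L)d(1-1/\alpha)}$ for a constant $\Gamma$ independent of $j,G,m$.

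The heart of the proof is an almost-sure bound, at each scale $j$, on $\sum_{m\in\mathbb{P}_j^d}|\lambda_m^{j,G}|^p$. Pick an exponent $r$ with $0<r\le p$ for which a standard S$\alpha$S variable has finite $r$-th absolute moment: this is automatic for $\alpha=2$ and forces $r<\alpha$ when $\alpha<2$. Since $r/p\le 1$, subadditivity of $t\mapsto t^{r/p}$ and the bound $\expect{|\lambda_m^{j,G}|^r}\le C_r\,2^{(j+L)dr(1-1/\alpha)}$ give
\[
	\expect{\Bigl(\sum_{m\in\mathbb{P}_j^d}|\lambda_m^{j,G}|^p\Bigr)^{r/p}}\ \le\ \expect{\sum_{m\in\mathbb{P}_j^d}|\lambda_m^{j,G}|^r}\ \le\ C_r\,2^{(j+L)d(r(1-1/\alpha)+1)},
\]
and Markov's inequality then yields, for every $\delta>0$,
\[
	\mathscr{P}\Bigl(\bigl(\sum\nolimits_m|\lambda_m^{j,G}|^p\bigr)^{1/p}>2^{(j+L)d((1-1/\alpha)+(1+\delta)/r)}\Bigr)\ \le\ C_r\,2^{-(j+L)d\delta}.
\]
This is summable over $j$ and over the finitely many $G\in G^j$, so Borel--Cantelli gives that, almost surely, $\bigl(\sum_m|\lambda_m^{j,G}|^p\bigr)^{1/p}\le 2^{(j+L)d((1-1/\alpha)+(1+\delta)/r)}$ for all large $j$ and all $G$. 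Inserting this bound into the $b_{p,q}^{\tau,\mathrm{per}}$-norm (the remaining finitely many terms being a.s.\ finite), one gets $\norm{\lambda}_{b_{p,q}^{\tau,\mathrm{per}}}<\infty$ whenever the series $\sum_j 2^{j(\tau-d/p)q}\,2^{jdq((1-1/\alpha)+(1+\delta)/r)}$ converges, i.e.\ whenever $\tau<d(1/p+1/\alpha-1/r-1)-d\delta/r$. Letting $\delta\downarrow 0$ and choosing $r$ optimally --- $r=p$ for $\alpha=2$, $r=p$ for $\alpha<2$ with $p<\alpha$, and $r\uparrow\alpha$ for $\alpha<2$ with $p\ge\alpha$ --- recovers exactly $\tau<-d/2$ in the Gaussian case and $\tau<d(1/\max(p,\alpha)-1)$ in the stable case. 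The cases $p=\infty$ or $q=\infty$ follow in the same way, with suprema replacing the corresponding sums.

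I expect the main difficulty to be the sharpness of this scale-by-scale estimate when $\alpha<2$ and $p\ge\alpha$, where the coefficients have infinite $p$-th moment: a crude bound such as $\sum_m|\lambda_m^{j,G}|^p\le 2^{(j+L)d}\max_m|\lambda_m^{j,G}|^p$ wastes a whole factor $2^{(j+L)d}$ and only reproduces the weaker range $\tau<-d$ already contained in Corollary~\ref{general_besov}; recovering the sharp exponent $d(1/p-1)$ requires genuinely using the heavy tails via a fractional moment $r<\alpha$. A secondary technical point, to be handled carefully, is the justification that $\innprod{w_\alpha}{\cdot}$ extends from $\S(\T^d)$ to $H_2^{d/2+\epsilon}(\T^d)$ and remains S$\alpha$S with the predicted scale, which relies on the localization construction of Section~\ref{measurable} together with the continuity of the characteristic functional.
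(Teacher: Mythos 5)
Your argument is correct, and for the delicate range $p\ge\alpha$ it takes a genuinely different route from the paper. Both proofs begin identically: reduce via Proposition~\ref{lem:smooth} and Corollary~\ref{theo:isomorphism_2} to a statement about the wavelet coefficients, and identify each coefficient as S$\alpha$S with scale proportional to $2^{jd(1-1/\alpha)}$ (this is Lemma~\ref{lem:moments} combined with the change-of-variables identity $\norm{\psi_{G,m}^{j,\mathrm{per}}}_{L_\alpha(\T^d)}^p=2^{jdp(1/2-1/\alpha)}\norm{\psi_{G,0}^{\mathrm{per}}}_{L_\alpha(\T^d)}^p$). From there the paper simply computes the expectation of the $p$-th power of the $b_{p,p}^{\tau,\mathrm{per}}$ norm; this forces $p=q$ and, when $\alpha<2$, forces $p<\alpha$ so that $C_{p,\alpha}<\infty$, and the remaining ranges ($p\ge\alpha$, $q\ne p$, $p$ or $q$ infinite) are then recovered through the embeddings of Proposition~\ref{prop:embed} with an auxiliary exponent $p_0<\alpha$ chosen close to $\alpha$. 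You instead work scale by scale: the subadditivity bound $\bigl(\sum_m\abs{\lambda_m^{j,G}}^p\bigr)^{r/p}\le\sum_m\abs{\lambda_m^{j,G}}^r$ with a fractional moment $r<\min(p,\alpha)$ (or $r=p$ when that moment exists), followed by Markov and Borel--Cantelli, gives an almost sure bound of order $2^{jd(1-1/\alpha+(1+\delta)/r)}$ on $\bigl(\sum_m\abs{\lambda_m^{j,G}}^p\bigr)^{1/p}$, which is then summed deterministically in $j$ for any $q$. This treats all $p,q$ at once, with no embedding step, and your heuristic check (the scale-$j$ sum being governed by the maximum of $2^{jd}$ heavy-tailed variables, of order $2^{jd/\alpha}$ times the common scale) confirms the exponent is not lost; the limiting conditions $\tau<-d/2$ and $\tau<d(1/\max(p,\alpha)-1)$ come out exactly as in the theorem. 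The one technical point shared by both proofs, which you rightly flag, is the extension of $\innprod{w_\alpha}{\cdot}$ from $\S(\T^d)$ to the finite-regularity wavelets; the localized version of $w_\alpha$ in $H_2^{-d/2-\epsilon}(\T^d)$ together with the density of $\S(\T^d)$ in $H_2^{d/2+\epsilon}(\T^d)$ makes this routine, and the paper relies on the same device implicitly when applying Lemma~\ref{lem:moments} to wavelets that are not in $\S(\T^d)$.
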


We first prove a result on the moments of S$\alpha$S white noises.
\begin{definition}
For every $\alpha \in (0,2]$ and $p\in (0,\infty)$, let
\begin{equation}
	C_{p,\alpha}(\varphi) 
	:= 
	\frac{\expect{  \abs{ \innprod{  w_\alpha }{\varphi } }^p}}
	{\norm{\varphi }_{L_{\alpha}(\T^d)}^p}
	\in [0,\infty]
\end{equation}
for $\varphi \in \S(\T^d)\backslash \{0\}$.
\end{definition}

\begin{lemma} \label{lem:moments}
Let $\alpha \in (0,2]$ and $p\in (0,\infty)$. The functional $C_{p,\alpha}$ is independent of $\varphi \in \S(\T^d)$, and moreover, $C_{p,\alpha}<\infty$  iff $\alpha = 2$ or $p< \alpha < 2$. 
\end{lemma}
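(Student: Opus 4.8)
The plan is to exploit the defining characteristic functional of $w_\alpha$, which is the key structural fact: $\CF_{w_\alpha}(\varphi) = \exp(-\gamma^\alpha \norm{\varphi}_{L_\alpha(\T^d)}^\alpha)$. The first step is to observe that for a single test function $\varphi$, the random variable $\innprod{w_\alpha}{\varphi}$ has characteristic function $\xi \mapsto \exp(-\gamma^\alpha |\xi|^\alpha \norm{\varphi}_{L_\alpha(\T^d)}^\alpha)$, so it is a symmetric-$\alpha$-stable random variable with scale parameter $\gamma \norm{\varphi}_{L_\alpha(\T^d)}$. This follows immediately from replacing $\varphi$ by $\xi \varphi$ in the functional and using the homogeneity of the $L_\alpha$-norm. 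Consequently $\innprod{w_\alpha}{\varphi}$ has the same law as $\gamma \norm{\varphi}_{L_\alpha(\T^d)} \cdot Z$, where $Z$ is a standard S$\alpha$S random variable, and therefore
\begin{equation}
	\expect{\abs{\innprod{w_\alpha}{\varphi}}^p} = \gamma^p \norm{\varphi}_{L_\alpha(\T^d)}^p \, \expect{\abs{Z}^p}.
\end{equation}
Dividing by $\norm{\varphi}_{L_\alpha(\T^d)}^p$ shows that $C_{p,\alpha}(\varphi) = \gamma^p \expect{\abs{Z}^p}$ is indeed independent of $\varphi \in \S(\T^d)\setminus\{0\}$ — notice that $\varphi \neq 0$ forces $\norm{\varphi}_{L_\alpha(\T^d)} > 0$ so the ratio is well-defined.

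The second step is to determine exactly when $\expect{\abs{Z}^p} < \infty$. For $\alpha = 2$, $Z$ is Gaussian and all moments are finite, so $C_{p,2} < \infty$ for every $p \in (0,\infty)$. For $\alpha < 2$, it is a classical fact about stable laws (see \cite{Taqqu1994}) that the S$\alpha$S density has a heavy tail decaying like $|z|^{-1-\alpha}$, so $\expect{\abs{Z}^p} < \infty$ if and only if $p < \alpha$. Combining the two cases gives precisely the stated dichotomy: $C_{p,\alpha} < \infty$ iff $\alpha = 2$, or $\alpha < 2$ and $p < \alpha$.

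I expect the main (minor) obstacle to be the justification that the characteristic-functional identity really does pin down the one-dimensional marginal law $\innprod{w_\alpha}{\varphi}$ — one must recall that the characteristic functional evaluated along the ray $\{\xi\varphi : \xi \in \R\}$ is exactly the characteristic function of the real random variable $\innprod{w_\alpha}{\varphi}$, which is a consequence of the definition of $\CF_{w_\alpha}$ together with Fubini/measurability of $(\omega,\xi) \mapsto \innprod{w_\alpha(\omega)}{\xi\varphi}$. Once that is in place, everything reduces to the textbook moment properties of scalar stable laws. A secondary point worth a sentence is that $\expect{\abs{\innprod{w_\alpha}{\varphi}}^p}$ and hence $C_{p,\alpha}(\varphi)$ may legitimately equal $+\infty$ (consistent with the stated codomain $[0,\infty]$), and the computation above is valid in $[0,\infty]$ regardless. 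No deeper machinery — no wavelets, no Besov theory — is needed for this lemma; it is purely a statement about the scalar projections of the noise.
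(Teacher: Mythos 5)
Your proof is correct and follows essentially the same route as the paper: both identify $\innprod{w_\alpha}{\varphi}$ (after normalization by $\norm{\varphi}_{L_\alpha(\T^d)}$) as an S$\alpha$S random variable whose law does not depend on $\varphi$, via the characteristic functional, and then invoke the standard moment/tail properties of scalar stable laws from \cite{Taqqu1994}. No gaps; the scaling-by-$\xi$ computation you use is just the paper's normalization argument written slightly differently.
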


\begin{proof}
The characteristic function of the  random variable $\innprod{ w}{\varphi} / \norm{\varphi}_{L_{\alpha}(\T^d)}$ is 
\begin{align}
	\CF_w \parenth{ \xi \frac{\varphi}{\norm{\varphi}_{L_{\alpha}(\T^d)}} } 
	&= 
	\exp \parenth{ -\gamma^\alpha 
	\norm{ \frac{\xi \varphi}{\norm{\varphi}_{L_{\alpha}(\T^d)}} }_{L_{\alpha}(\T^d)}^\alpha }  
	= \exp\left(- \gamma^\alpha |\xi|^\alpha \right).
\end{align}
Thus, the random variables $\norm{\varphi}_{L_{\alpha}(\T^d)}^{-1} \innprod{ w}{\varphi}$ are identically distributed for every $\varphi\neq 0$. From this, we deduce that
\begin{align}
	C_{p,\alpha}(\varphi)
	&=
	\frac{\expect{  \abs{ \innprod{  w }{\varphi } }^p}}
	{\norm{\varphi }_{L_{\alpha}(\T^d)}^p}  
	=
	\expect{ \abs{\norm{\varphi}_{L_{\alpha}(\T^d)}^{-1} \innprod{ w}{\varphi}}^p }
\end{align} 
is independent of $\varphi$.  Moreover, $C_{p,\alpha}$ is the $p$th-moment of a S$\alpha$S random variable. As such, it is always finite for $\alpha = 2$ (Gaussian case) and it is finite for $\alpha<2$ iff $p<\alpha$.
\end{proof} 

\noindent
\textit{Proof of Theorem \ref{theo:regu_stable}.}
We first focus on Besov spaces of the form $B^{\tau}_{p,p}(\T^d)$ with $0<p<\infty$. 
Let $u \in \N$ such that $u > \max(d/2+\epsilon,\tau,d\left( 1/p-1/2\right)_+ - \tau+\epsilon)$ for some $\epsilon>0$, and 
let $\{\Psi_{G,m}^{j,\mathrm{per}}\}$ be an orthonormal basis of $L_2(\mathbb{T}^d)$  with regularity $u$. 
From Corollary \ref{theo:isomorphism_2} and Proposition \ref{lem:smooth}, we know that $w \in B_{p,p}^\tau(\T^d)$ iff 
\begin{equation}
	h_{p,\alpha,\psi} 
	= 
	\sum_{j\geq0} 
	2^{j \parenth{\tau p-d + dp/2}} 
	\sum_{G,m} 
	\abs{ \innprod{ w}{ \psi_{G,m}^{j,per}} }^p 
	< \infty.
\end{equation}
We deduce that if 
\begin{equation}
	\expect{h_{p,\alpha,\psi}}
	=  
	\sum_{j\geq0} 
	2^{j \parenth{\tau p-d + dp/2}} 
	\sum_{G,m}
	\expect{ \abs{ \innprod{ w}{ \psi_{G,m}^{j,per}} }^p} 
	\end{equation}
is finite, then $w_{\alpha} \in B_{p,p}^{\tau}(\T)$ a.s. (of course, a positive random variable $X$ -- here the Besov norm of $w$ --  with a finite mean is a.s. finite). 

Based on Lemma \ref{lem:moments}, we know that 
\begin{equation}
	\expect{ \abs{ \innprod{ w}{ \psi_{G,m}^{j,per}} }^p}  
	= 
	C_{p,\alpha} \norm{ \psi_{G,m}^{j,per} }_{L_{\alpha}(\T^d)}^p.
\end{equation} 
Moreover, a change of variables shows that, for every $(j,G,m)$, 
\begin{equation}
	\norm{\psi_{G,m}^{j,per}}_{L_{\alpha}(\T^d)}^p 
	= 
	2^{jdp \parenth{1/2 - 1/\alpha}} \norm{\psi^{per}_{G,0}}_{L_{\alpha}(\T^d)}^p.
\end{equation}  
Then, we have
\begin{align*}
	\expect{h_{p,\alpha,\psi}} 
	&=
	\sum_{j\geq0} 
	2^{j \parenth{\tau p-d+ \frac{p}{2}}} 
	\sum_{G,m}
	\expect{ \abs{ \innprod{ w}{ \psi_{G,m}^{j,per}} }^p} \\
	&=
	\sum_{j\geq 0}
	2^{j \parenth{ \tau p - d + \frac{dp}{2} } } 
	\abs{\mathbb{P}_j^d}
	\sum_{G}
	C_{p,\alpha}
	2^{j d p \parenth{\frac{1}{2} - \frac{1}{\alpha}}}
	\norm{\psi^{per}_{G,0}}_{L_{\alpha}(\T^d)}^p  \\
	&=
	2^{Ld} C_{p,\alpha} 
	\sum_G \norm{ \psi^{per}_{G,0}}_{L_{\alpha}(\T^d)}^p 
	\parenth{
	\sum_{j\geq0}
	2^{j \parenth{\tau p + dp - \frac{dp}{\alpha} }}
	},
\end{align*}
recalling that $\mathbb{P}_j^d$ are the translation sets satisfying $\abs{\mathbb{P}_j^d}=2^{(j+L)d}$.
Finally, $\expect{h_{p,\alpha,\psi}}$ is finite iff $C_{p,\alpha}<\infty$ and $\tau p + dp - dp/\alpha = p(\tau +d-d/\alpha) < 0$, \emph{i.e.}, $\tau <d/\alpha - d$.

For $\alpha = 2$ (Gaussian case), we have $C_{p,\alpha} <\infty$ for every $p<\infty$. Hence, we obtain \eqref{eq:Gaussian} for every $p=q\neq \infty$ and $\tau <-1/2$. The other cases are deduced with the help of Proposition \ref{prop:embed}. For instance, for $\tau<-d/2$, let $0<\epsilon<- \tau - d/2$. Then, $w_2 \in B_{2,2}^{\tau+\epsilon}(\T^d)$ a.s. and $B_{2,2}^{\tau+\epsilon}(\T^d) \subset B_{\infty,\infty}^\tau(\T^d)$. 
Thus, $w_2 \in B_{\infty,\infty}^\tau(\T^d)$ a.s. 
For $0<p,q \leq \infty$ and $\tau<-d/2$, let us fix $0<\epsilon < -d/2 - \tau$. Then, $B_{p,p}^{\tau+\epsilon}(\T^d) \subset B_{p,q}^\tau(\T^d)$, and thus $w_2 \in  B_{p,q}^\tau(\T^d)$ a.s. 

Similarly, because $C_{p,\alpha}<\infty$ for $p<\alpha$, we obtain \eqref{eq:stable2} for $p=q<\alpha$ and $\tau <d/\alpha - d$. 

If  $\alpha \leq p = q \leq \infty$ and $\tau < d/p - d$, 
we define $\delta = d/p - d - \tau > 0$. 
Then we let $0<\epsilon < \delta /3$ and $p_0< \alpha$ such that $1/\alpha < 1/p_0 < 1/\alpha + \delta/3d$. 
From Proposition \ref{prop:embed}, we know that 
\begin{equation}
	B_{p_0,p_0}^{\tau +  d/p_0 - d/p + \epsilon}(\T^d) 
	\subset 
	B_{p,p}^\tau(\T^d).
\end{equation} 
Moreover, we have 
\begin{align}
	\tau+  \frac{d}{p_0}-\frac{d}{p} + \epsilon 
	& =   
	\frac{d}{p_0}-d + \epsilon - \delta \nonumber \\
	& <   
	\frac{d}{\alpha}-d.
\end{align} 
Hence, 
\begin{equation}
	w_\alpha 
	\in  
	B_{p_0,p_0}^{\tau + d/p_0 - d/p + \epsilon}(\T^d) 
	\subset 
	B_{p,p}^\tau(\T^d) \quad a.s.
\end{equation}
This proves  \eqref{eq:stable2} for $p=q$. 
The other cases follow from embedding arguments similarly to the Gaussian case. 
\hfill \qed

\section{Discussion} \label{sec:discussion}

The primary contributions of this paper are the smoothness estimates for non-Gaussian white noises. In Figure \ref{fig:noise_besov}, we summarize the results of the previous section, \emph{i.e.} Corollary \ref{general_besov}, Proposition \ref{prop:Poisson}, and Theorem \ref{theo:regu_stable}.

\begin{figure}[ht]  
\centering

\begin{subfigure}[b]{0.45\textwidth}
\begin{tikzpicture}[x=4cm,y=3cm,scale=0.70]

\draw[very thick, ->] (-1,0)--(0.5,0) node[circle,right] {$\frac{1}{p}$} ;
\draw[very thick, <->] (-1,-1)--(-1,0.5) node[circle,above] {$\tau$} ;

\draw[ thick,color=black]
(-1+0.05,-1/3) -- (-1-0.05,-1/3)  node[black,left] { $-\frac{d}{2}$};

\draw[ thick,color=black]
(-1+0.05,-2/3) -- (-1-0.05,-2/3)  node[black,left] { $-d$};

\draw[ thick,color=black]
(-0.25,-0.05) -- (-0.25,0.05)  node[black,above] { $1$};

\draw[ thick,color=black]
(-5/8,-0.05) -- (-5/8,0.05)  node[black,above] { $\frac{1}{2}$};
\fill[red, opacity= 0.4] (-1,-2/3) -- (-5/8,-1/3) -- (0.5,-1/3) -- (0.5,-1) -- (-1,-1) -- cycle; 
\draw[red, very thick, dashed,->](-5/8,-1/3) --(0.5,-1/3);
\draw[red, very thick, dashed,- ](-1,-2/3) -- (-5/8,-1/3);
\draw[red, very thick,->](-1,-2/3) -- (-1,-1);
\end{tikzpicture}
\caption{General L\'evy noise}
\end{subfigure}
\begin{subfigure}[b]{0.45\textwidth}
\begin{tikzpicture}[x=4cm,y=3cm,scale=0.70]

\draw[very thick, ->] (-1,0)--(0.5,0) node[circle,right] {$\frac{1}{p}$} ;
\draw[very thick, <->] (-1,-1)--(-1,0.5) node[circle,above] {$\tau$} ;

\draw[ thick,color=black]
(-1+0.05,-2/3) -- (-1-0.05,-2/3)  node[black,left] { $-d$};

\draw[ thick,color=black]
(-0.25,-0.05) -- (-0.25,0.05)  node[black,above] { $1$};

\fill[red, opacity= 0.4] (-1,-2/3) -- (0,0.25) -- (0.25,11/24) -- (0.5,11/24) -- (0.5,-1) -- (-1,-1) -- cycle; 
\draw[red, very thick, dashed,-> ](-1,-2/3) -- (0.25,11/24);
\draw[red, very thick,->](-1,-2/3) -- (-1,-1);

\end{tikzpicture}
\caption{Compound Poisson}
\end{subfigure}

\begin{subfigure}[b]{0.45\textwidth}
\begin{tikzpicture}[x=4cm,y=3cm,scale=0.70]

\draw[very thick, ->] (-1,0)--(0.5,0) node[circle,right] {$\frac{1}{p}$} ;
\draw[very thick, <->] (-1,-1)--(-1,0.5) node[circle,above] {$\tau$} ;

\draw[ thick,color=black]
(-1+0.05,-1/3) -- (-1-0.05,-1/3)  node[black,left] { $-\frac{d}{2}$};

\fill[red, opacity= 0.4] (-1,-1) -- (-1,-1/3) -- (0.5,-1/3) -- (0.5,-1) -- cycle; 
\draw[red, very thick,-> ](-1,-1/3) -- (-1,-1);
\draw[red, very thick, dashed,->](-1,-1/3) --(0.5,-1/3);

\draw[ thick,color=black]
(-0.25,-0.05) -- (-0.25,0.05)  node[black,above] { $1$};
\end{tikzpicture}
\caption{Gaussian}
\end{subfigure}
\begin{subfigure}[b]{0.45\textwidth}
\begin{tikzpicture}[x=4cm,y=3cm,scale=0.70]

\draw[very thick, ->] (-1,0)--(0.5,0) node[circle,right] {$\frac{1}{p}$} ;
\draw[very thick, <->] (-1,-1)--(-1,0.5) node[circle,above] {$\tau$} ;

\draw[ thick,color=black]
(-1+0.05,-2/3) -- (-1-0.05,-2/3)  node[black,left] { $-d$};

\draw[ thick,color=black]
(-0.25,-0.05) -- (-0.25,0.05)  node[black,above] { $1$};

\fill[red, opacity= 0.4] (-1,-2/3) -- (0,0.25) -- (0.5,0.25) -- (0.5,-1) -- (-1,-1) -- cycle; 
\draw[red, very thick, dashed,->](0,0.25) --(0.5,0.25);
\draw[red, very thick, dashed,- ](-1,-2/3) -- (0,0.25);
\draw[red, very thick,->](-1,-2/3) -- (-1,-1);

\draw[ thick,color=black]
(0,0.05) -- (0,-0.05)  node[black,below] {$\frac{1}{\alpha}$};

\end{tikzpicture}
\caption{S$\alpha$S, $\alpha <1$}
\end{subfigure}

\caption{These diagrams represent the Besov localization of different white noises. 
A white noise is in a given Besov space $B_{p,q}^\tau(\T^d)$ a.s. if $(1/p,\tau)$ is located in the shaded region of the diagram.} 
\label{fig:noise_besov}
\end{figure}
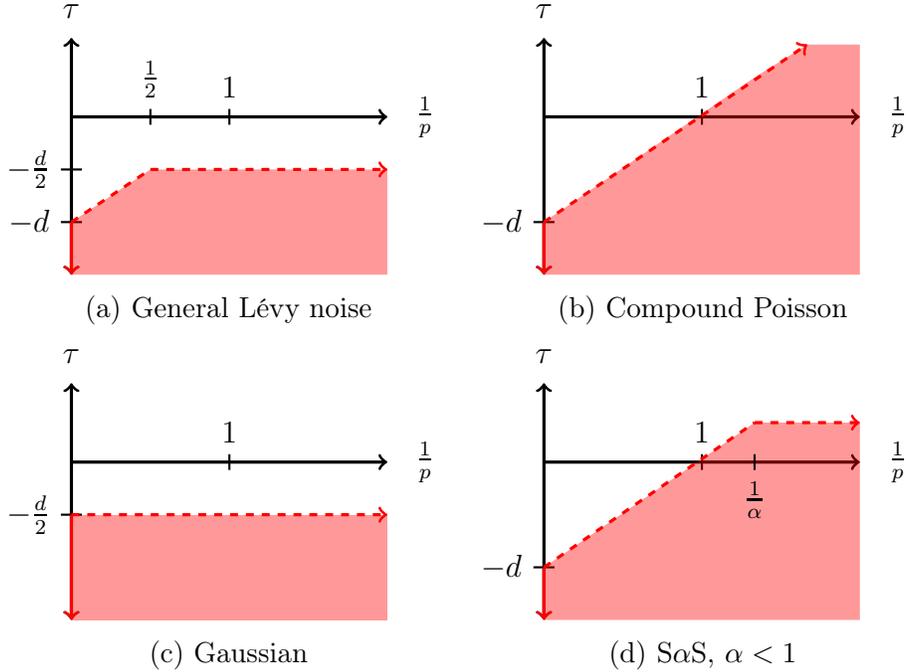

As a point of fact, the results in the Gaussian case are not new. Indeed, Veraar obtained \eqref{eq:Gaussian} in Theorem 3.4 of \cite{veraar2010regularity}, and his result is actually stronger because he showed that $w_2 \in B_{p,\infty}^{-d/2}(\T^d)$ almost surely for $1\leq p<\infty$. 
In our case, we only have results for regularity $\tau<-d/2$. Moreover, Veraar derived some converse results, showing, for instance, that $w_2$ is almost surely not in $B_{p,q}^{-d/2}(\T^d)$ if $q\neq \infty$. This slight refinement is largely due to the fact that his approach is intrinsically Gaussian in nature (the white noise is described by its independent collection of Fourier coefficients, see Proposition 3.3 of \cite{veraar2010regularity}). Unfortunately, these methods do not naturally extend to the non-Gaussian setting.   

By contrast, our approach allows us to directly work with Besov parameters $p,q< 1$ that are not considered in \cite{veraar2010regularity}.
For instance, we see that in the proof of Theorem \ref{theo:regu_stable} for $\alpha<1$, we deduce all of our regularity results from the preliminary cases with $p=q <\alpha<1$. 

Symmetric-$\alpha$-stable noises are natural generalizations of Gaussian white noises.
They are a parametric family defined by the parameter $\alpha \in (0,2]$, where $\alpha = 2$ corresponds to the Gaussian case  \cite{Taqqu1994}. 
In Theorem \ref{theo:regu_stable}, we characterized their smoothness behavior.
As $\alpha$ becomes smaller, we see in particular that the regularity of $w_\alpha$ increases.
When $\alpha$ goes to zero, the shaded region in Figure \ref{fig:noise_besov}(d) converges to the shaded region for a compound Poisson noise, Figure \ref{fig:noise_besov}(b).

While we have determined smoothness bounds, we have not yet exhausted the topic of regularity for periodic L\'evy noises.
First, we conjecture a $0-1$ law for the Besov regularity of periodic noises.
\begin{conjecture}
For every noise $w$ and every $\tau\in \R$, $0<p,q\leq \infty$, 
\begin{equation*}
\mathbb{P} \left(w \in B_{p,q}^\tau(\T^d) \right) = 0  \quad  \text{or} \quad 1.
\end{equation*}
\end{conjecture}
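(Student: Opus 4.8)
\medskip
\noindent
\emph{Towards a proof.} The plan is to deduce the $0$--$1$ law from a Blumenthal-type triviality of the germ $\sigma$-field of $w$ at a point, combined with the stationarity of the noise. Fix $\tau\in\R$ and $0<p,q\leq\infty$. For $x\in\T^d$ and $r>0$, let $\mathcal{F}_r(x)$ be the sub-$\sigma$-field of $\mathcal{B}_c(\S'(\T^d))$ generated by the random variables $\innprod{w}{\varphi}$ with $\varphi\in\S(\T^d)$, $\Supp\varphi\subset B(x,r)$, and set $\mathcal{F}_{0^+}(x)=\bigcap_{r>0}\mathcal{F}_r(x)$. Since multiplication by a smooth bump is bounded on $B_{p,q}^\tau(\T^d)$ and Besov spaces on the torus are local (a finite smooth partition of unity reduces global regularity to local regularity), the natural move is to localize the question by introducing, for each $x$, the event
\begin{equation}
	A_x := \setb{w\in\S'(\T^d)}{\phi w\in B_{p,q}^\tau(\T^d)\ \text{for some}\ \phi\in\S(\T^d)\ \text{with}\ \phi\equiv 1\ \text{near}\ x}.
\end{equation}
Using Theorem~\ref{theorem:measurable} (measurability of Besov spaces) together with the fact that $\phi w\in B_{p,q}^\tau(\T^d)$ at one localization scale forces the same at every finer scale, one checks that $A_x$ is measurable and lies in $\mathcal{F}_{0^+}(x)$.

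The first main step is the triviality of $\mathcal{F}_{0^+}(x)$. Let $\mathcal{G}_r(x)$ be generated by the $\innprod{w}{\varphi}$ with $\Supp\varphi\cap\overline{B(x,r)}=\emptyset$; the product form \eqref{eq:CF} of $\CF_w$ (and a monotone class argument) shows that $\mathcal{F}_r(x)$ and $\mathcal{G}_r(x)$ are independent. Splitting $\varphi=\varphi\chi_r+\varphi(1-\chi_r)$ for a cutoff $\chi_r$ supported in $B(x,r)$ and equal to $1$ near $x$, and using the $L_2(\T^d)$-continuity of $\CF_w$ established in the proof of Proposition~\ref{lem:smooth} together with $\norm{\varphi\chi_r}_{L_2(\T^d)}\to 0$ as $r\to 0$, one gets $\innprod{w}{\varphi\chi_r}\to 0$ in probability; hence every $\innprod{w}{\varphi}$ is measurable with respect to $\bigvee_{r>0}\mathcal{G}_r(x)$, so this $\sigma$-field generates $\mathcal{B}_c(\S'(\T^d))$ up to null sets. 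Consequently $\mathcal{F}_{0^+}(x)$, being contained in every $\mathcal{F}_r(x)$ and hence independent of every $\mathcal{G}_r(x)$, is independent of itself, so $\mathscr{P}(A_x)\in\{0,1\}$; and by stationarity of $w$ the value $\rho:=\mathscr{P}(A_x)$ does not depend on $x$.

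It then remains to pass from the local statement to the global one. If $\rho=0$, then since $\{w\in B_{p,q}^\tau(\T^d)\}\subset A_x$ for any fixed $x$ (global regularity implies local regularity), we get $\mathscr{P}(w\in B_{p,q}^\tau(\T^d))=0$ and the conjecture holds. The case $\rho=1$ is the main obstacle, and I expect it to require genuinely new input: knowing that $A_x$ holds almost surely for each \emph{fixed} $x$ does not rule out that $w$ fails to belong to $B_{p,q}^\tau(\T^d)$ because of an obstruction sitting at a \emph{random} location. This is not vacuous --- for a compound Poisson noise with $\tau\geq d(1/p-1)$ the Dirac masses almost surely avoid any prescribed point, yet almost surely $w\notin B_{p,q}^\tau(\T^d)$ --- so a naive covering of $\T^d$ by finitely many localizing patches does not close the gap, and the statement may have to be understood modulo the degenerate event $\{w=0\}$. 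An alternative route, proving that $\{w\in B_{p,q}^\tau(\T^d)\}$ is a tail event for the decomposition of $w$ according to jump amplitudes, runs into the same difficulty, since a finite sum of large-amplitude Dirac masses never belongs to $B_{p,q}^\tau(\T^d)$ when $\tau\geq d(1/p-1)$. Settling this local-to-global passage --- presumably through a finer analysis of where, and with what probability, the singularities of a L\'evy noise can concentrate --- is, in our view, the heart of the matter.
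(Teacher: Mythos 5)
The statement you are attacking is given in the paper as an open conjecture; the authors offer no proof, so there is nothing to compare your argument against, and your own text makes clear that it is not a proof either. Your Blumenthal-type step is sound in outline: the factorization of $\CF_w$ over test functions with disjoint supports does yield independence of $\mathcal{F}_r(x)$ and $\mathcal{G}_r(x)$, the $L_2(\T^d)$-continuity of $\CF_w$ established in the proof of Proposition~\ref{lem:smooth} does give $\innprod{w}{\varphi\chi_r}\to 0$ in probability, and the resulting triviality of $\mathcal{F}_{0^+}(x)$, hence $\mathscr{P}(A_x)\in\{0,1\}$, is believable (modulo restricting the quantifier over $\phi$ to a countable family so that $A_x$ is genuinely measurable). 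The gap is exactly where you locate it: knowing $\mathscr{P}(A_x)=1$ for each \emph{fixed} $x$ says nothing about the uncountable intersection $\bigcap_{x\in\T^d}A_x$, which is what global membership in $B_{p,q}^\tau(\T^d)$ requires, and no finite covering argument closes this.

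However, your compound Poisson observation does more than obstruct the proof: pushed one step further, it refutes the conjecture as literally stated. For a compound Poisson noise $w=\sum_{n=1}^{N}a_n\delta(\cdot-\bm{r}_n)$ with rate $c>0$ and $\tau>d(1/p-1)$, one has $w\in B_{p,q}^\tau(\T^d)$ if and only if $N=0$: a nonzero finite sum of Diracs at distinct points has wavelet coefficients of size $2^{jd}$ at every scale near each atom, so it fails to lie in $B_{p,q}^\tau(\T^d)$ for such $\tau$. Hence $\mathscr{P}\bigl(w\in B_{p,q}^\tau(\T^d)\bigr)=\mathscr{P}(N=0)=\ue^{-c}\in(0,1)$. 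Your phrasing ``yet almost surely $w\notin B_{p,q}^\tau(\T^d)$'' is therefore incorrect --- the failure occurs precisely on $\{N\geq 1\}$, and that is exactly what produces a probability strictly between $0$ and $1$. The right conclusion of your analysis is not merely that the local-to-global passage is the heart of the matter, but that the conjecture must be reformulated (for instance by excluding finite-activity noises, conditioning on $\{w\neq 0\}$, or asserting the dichotomy only for infinite L\'evy measure); you should state this explicitly rather than leave it as a parenthetical doubt.
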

Second, in the case of $\alpha$-stable noises, we obtained results for regularity smaller than $1/\alpha -1$. 
However, it is shown in Theorem IV.1 of \cite{ciesielski1993quelques} that S$\alpha$S processes on the real line ($d=1$) with $1<\alpha<2$ are in the local Besov spaces $B_{p,\infty,\mathrm{loc}}^{\alpha}(\R)$ if $1\leq p< \alpha$. This suggests the following.
\begin{conjecture}
If $0<p<\alpha<2$, the S$\alpha$S processes $w_{\alpha}$  satisfy
\begin{equation*}
\mathbb{P} \left(w_{\alpha} \in B_{p,\infty}^{d(1/\alpha-1)}(\T^d) \right) =  1. 
\end{equation*}
\end{conjecture}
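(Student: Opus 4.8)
\textit{Proof proposal.} The plan is to pass to the wavelet side and prove a strong-law-of-large-numbers estimate for the wavelet coefficients of $w_\alpha$ at the critical smoothness, crucially exploiting that $p<\alpha$ makes $\abs{\innprod{w_\alpha}{\varphi}}^p$ integrable to an order strictly above $1$. First I would record that $w_\alpha\in H_2^{-d/2-\epsilon}(\T^d)$ almost surely (Proposition \ref{lem:smooth}), so that for a wavelet smoothness $u\in\mathbb{N}$ chosen large enough Corollary \ref{theo:isomorphism_2} applies with $q=\infty$ and $\tau_\alpha:=d(1/\alpha-1)$: almost surely $w_\alpha\in B_{p,\infty}^{\tau_\alpha}(\T^d)$ if and only if the coefficients $\lambda_m^{j,G}:=\innprod{w_\alpha}{2^{(j+L)d/2}\Psi_{G,m}^{j,\mathrm{per}}}$ belong to $b_{p,\infty}^{\tau_\alpha,\mathrm{per}}$.

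Next I would compute the scaling of these coefficients exactly as in the proof of Lemma \ref{lem:moments}. The normalised coefficient $Z_m^{j,G}:=\innprod{w_\alpha}{\Psi_{G,m}^{j,\mathrm{per}}}/\bigl\|\Psi_{G,m}^{j,\mathrm{per}}\bigr\|_{L_\alpha(\T^d)}$ is a standard S$\alpha$S variable of scale $\gamma$, and, since Triebel's construction keeps all the wavelets supported within the torus, a change of variables gives $\bigl\|\Psi_{G,m}^{j,\mathrm{per}}\bigr\|_{L_\alpha(\T^d)}=c_G\,2^{(j+L)d(1/2-1/\alpha)}$ with $c_G>0$ independent of $j,m$. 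Hence $\lambda_m^{j,G}=c_G\,2^{(j+L)d(1-1/\alpha)}Z_m^{j,G}$, and inserting this into the $b_{p,\infty}^{\tau_\alpha,\mathrm{per}}$ quasi-norm makes every power of $2^j$ cancel \emph{at} $\tau=\tau_\alpha$, so that membership is equivalent to
\begin{equation*}
\sup_{j\in\mathbb{N}_0}\ 2^{-jd}\sum_{G\in G^j}\sum_{m\in\mathbb{P}_j^d}\abs{Z_m^{j,G}}^p\ <\ \infty\qquad\text{a.s.}
\end{equation*}
Since $\#\mathbb{P}_j^d\sim 2^{jd}$ and $\#G^j=O(1)$, the left side is, up to a constant, the scale-$j$ empirical mean of $\abs{Z_m^{j,G}}^p$, whose expectation is the finite constant $C_{p,\alpha}$ of Lemma \ref{lem:moments}; the task thus reduces to showing these empirical means stay bounded almost surely.

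For that I would combine independence with a super-linear moment bound. At a fixed scale $j$ the compactly supported wavelets $\Psi_{G,m}^{j,\mathrm{per}}$ overlap only when their translation indices differ by at most a fixed $R$, so $\mathbb{P}_j^d$ splits into $K=(2R+1)^d$ residue classes on each of which the supports are pairwise disjoint; because the S$\alpha$S white noise is independent on disjoint supports (its characteristic functional factorises) and stationary, the $Z_m^{j,G}$ within one class are i.i.d.\ standard S$\alpha$S. Choosing $r\in\bigl(1,\min(2,\alpha/p)\bigr)$ --- a nonempty range precisely because $p<\alpha$ --- one has $\expect{\abs{\,\abs{Z}^p-\expect{\abs{Z}^p}}^{r}}<\infty$ (as $pr<\alpha$), and a Marcinkiewicz--Zygmund / von Bahr--Esseen moment inequality applied class by class, then summed over the $O(1)$ classes and genders in $L^r$, gives
\begin{equation*}
\expect{\Bigl|\,2^{-jd}\sum_{G,m}\bigl(\abs{Z_m^{j,G}}^p-\expect{\abs{Z}^p}\bigr)\Bigr|^{r}}\ \le\ C\,2^{-jd(r-1)}.
\end{equation*}
Because $r>1$ these bounds are summable in $j$, so by Markov's inequality and the Borel--Cantelli lemma the scale-$j$ empirical mean of $\abs{Z_m^{j,G}}^p$ converges to $C_{p,\alpha}$ almost surely; in particular its supremum over $j$ is almost surely finite, which by the previous steps is exactly $w_\alpha\in B_{p,\infty}^{d(1/\alpha-1)}(\T^d)$ a.s.

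The main obstacle is this last step. A first- or second-moment estimate only controls the scale-$j$ empirical mean up to a factor growing (slowly) in $j$, and such a factor is fatal at the endpoint $\tau=\tau_\alpha$ with $q=\infty$, where the summable gain $2^{-jd(r-1)}$ is exactly what is required; obtaining that gain needs a moment of $\abs{Z}^p$ of some order $r>1$, which exists \emph{only} for $p<\alpha$ --- the very hypothesis that also keeps the target mean $C_{p,\alpha}$ finite. The remaining ingredients (the bounded-overlap structure of Triebel's periodic wavelets, and a consistent choice of $u$, $\epsilon$ and $r$) are routine, and the cases with $p$ or $q$ equal to $\infty$ should then follow by the embeddings of Proposition \ref{prop:embed} as in the proof of Theorem \ref{theo:regu_stable}.
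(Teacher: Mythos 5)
This statement is one of the paper's two closing \emph{conjectures}: the authors do not prove it (their own Theorem \ref{theo:regu_stable} stops strictly below the critical smoothness $d(1/\alpha-1)$), so there is no proof in the paper to compare yours against. What you propose is a genuine strengthening of the paper's method rather than a rederivation of it. The paper's argument for Theorem \ref{theo:regu_stable} bounds the \emph{expectation} of the Besov quasi-norm, which produces the geometric series $\sum_j 2^{jp(\tau+d-d/\alpha)}$ and therefore diverges exactly at $\tau=d(1/\alpha-1)$; your plan replaces this first-moment bound by a per-scale concentration estimate. Your reduction is correct: with Proposition \ref{lem:smooth}, Theorem \ref{theorem:measurable} and Corollary \ref{theo:isomorphism_2} (applied with $q=\infty$), and the scaling $\norm{\Psi_{G,m}^{j,\mathrm{per}}}_{L_\alpha(\T^d)}\asymp 2^{(j+L)d(1/2-1/\alpha)}$ already used in the paper, the exponent of $2^{j}$ does cancel at $\tau=d(1/\alpha-1)$ and membership in $B_{p,\infty}^{d(1/\alpha-1)}(\T^d)$ becomes $\sup_j 2^{-jd}\sum_{G,m}\abs{Z_m^{j,G}}^p<\infty$. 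The finite-overlap splitting into $O(1)$ residue classes of disjointly supported wavelets, the mutual independence of the noise on disjoint supports (factorization of $\CF_{w_\alpha}$), the von Bahr--Esseen bound with $r\in(1,\min(2,\alpha/p))$ (nonempty precisely because $p<\alpha$, and $pr<\alpha$ gives the required moment), and Markov plus Borel--Cantelli (which needs no independence across scales) all fit together; the summable gain $2^{-jd(r-1)}$ is exactly what the endpoint $q=\infty$ requires. So the strategy is sound and would, if written out, settle the conjecture affirmatively.

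Two points need tightening before this counts as a complete proof. First, Lemma \ref{lem:moments} and the independence-on-disjoint-supports property are established for test functions in $\S(\T^d)$, while the wavelets $\Psi_{G,m}^{j,\mathrm{per}}$ have only finite regularity $u$; you must extend the S$\alpha$S law of $\innprod{w_\alpha}{\varphi}$ and the mutual independence statement to $\varphi\in H_2^{d/2+\epsilon}(\T^d)$, e.g. by smoothing and using the a.s. localization of $w_\alpha$ in $H_2^{-d/2-\epsilon}(\T^d)$ --- the paper makes the same tacit extension in its proof of Theorem \ref{theo:regu_stable}, so this is a shared technical debt, not a flaw of your route. (Also, at the finitely many coarse scales where a residue class could wrap around the torus, the i.i.d. claim may fail, but finitely many scales cannot spoil an a.s. finite supremum.) Second, drop the closing remark that the remaining cases follow from the embeddings of Proposition \ref{prop:embed}: those embeddings lose an $\epsilon$ in the smoothness index and can never reach the endpoint. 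Fortunately they are not needed, since the conjecture concerns exactly the case $q=\infty$, $0<p<\alpha$ that your argument treats directly.
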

We expect finer results  than Corollary \ref{general_besov} for the regularity of general L\'evy noises (non-stable, non-compound Poisson). Ideally, we should be able to compute the probability of every Besov space for each L\'evy noise.  
Finally, in a future work, we plan to extend the results of this paper to sparse stochastic processes that are solutions of non-Gaussian stochastic partial differential equations.

\bibliographystyle{plain}
\bibliography{refs2}

\end{document}